\crefname{equation}{}{}
\crefname{enumi}{}{}
\numberwithin{equation}{section}
\newtheorem{thm}{Theorem}[section]
\newtheorem{prop}[thm]{Proposition}
\newtheorem{cor}[thm]{Corollary}
\newtheorem{lem}[thm]{Lemma}
\theoremstyle{definition}
\newtheorem{defn}[thm]{Definition}
\theoremstyle{remark}
\newtheorem{rem}[thm]{Remark}
\newtheorem{ex}[thm]{Example}
\newcommand{\K}{{\mathbb K}}
\newcommand{\Q}{{\mathbb Q}}
\newcommand{\R}{{\mathbb R}}
\newcommand{\mapright}[1]{%
 \smash{\mathop{%
  \hbox to 1cm{\rightarrowfill}}\limits_{#1}}}
\newcommand{\maprightd}[2]{%
 \smash{\mathop{%
  \hbox to 0.5cm{\rightarrowfill}}\limits^{#1}\limits_{#2}}}
\newcommand{\mapleft}[1]{%
 \smash{\mathop{%
  \hbox to 1cm{\leftarrowfill}}\limits_{#1}}}
\newcommand{\mapleftu}[1]{%
 \smash{\mathop{%
  \hbox to 0.8cm{\leftarrowfill}}\limits^{#1}}}
\newcommand{\maprightu}[1]{%
 \smash{\mathop{%
  \hbox to 1cm{\rightarrowfill}}\limits^{#1}}}
\newcommand{\maprightud}[2]{%
 \smash{\mathop{%
  \hbox to 1cm{\rightarrowfill}}\limits^{#1}_{#2}}}
\newcommand{\mapleftud}[2]{%
 \smash{\mathop{%
  \hbox to 1cm{\leftarrowfill}}\limits^{#1}_{#2}}}
\begin{document}
\title[On F\'elix--Tanr\'e rational models for polyhedral products]{On F\'elix--Tanr\'e rational models for polyhedral products 
}

\footnote[0]{{\it 2020 Mathematics Subject Classification}: 
13F55, 55P62, 57S12. 

{\it Key words and phrases.} Polyhedral product, homotopy colimit, rational model, formalizability,  toric manifold, partial quotient.   


Department of Mathematical Sciences, 
Faculty of Science,  
Shinshu University,   
Matsumoto, Nagano 390-8621, Japan   
e-mail:{\tt kuri@math.shinshu-u.ac.jp}
}

\author{Katsuhiko KURIBAYASHI}
\date{}
   
\maketitle

\begin{abstract} 
The F\'elix--Tanr\'e rational model for the polyhedral product of a fibre inclusion is considered. 
In particular, we investigate the rational model for the polyhedral product of a pair of Lie groups corresponding to arbitrary simplicial complex 
and the rational homotopy group of the polyhedral product.  
Furthermore, it is proved that for a partial quotient $N$ associated with a toric manifold $M$, the following conditions are equivalent: (i) $N=M$. (ii) The odd-degree rational cohomology of $N$ is trivial. (iii) The torus bundle map from $N$ to the Davis--Januszkiewicz space is formalizable. 
\end{abstract}

\section{introduction}\label{sect:Introduction}
Toric varieties are fascinating objects in the study of algebraic geometry, combinatorics, symplectic geometry and topology.  For a nonsingular toric variety, so-called a toric manifold, is given by the quotient of a moment-angle manifold by a torus action 
with Cox's construction.  By generalizing the construction of moment-angle manifolds, we obtain 
a {\it moment-angle complex} and more general {\it polyhedral products} \cite{BBCG, G-T, K-L}, which are defined by the colimit of spaces with gluing data obtained from a simplicial complex.  
Thus we are also interested in the generalized ones. 

In \cite{F-T}, F\'elix and Tanr\'e have given a rational model for a polyhedral product of a tuple of spaces corresponding to an arbitrary simplicial complex. One of the aims of this manuscript is to construct a tractable rational model for a polyhedral product by refining the model due to F\'elix and Tanr\'e. By applying the construction to a polyhedral product for a pair of Lie groups, we have a result on the rational homotopy groups of the polyhedral product; see Theorem \ref{thm:Rational_homotopy} and Proposition \ref{prop:PolyProd-model}. 

Moreover, the formality of a toric manifold and the non-formalizability for a partial quotient, which is not a toric manifold, are discussed with their models induced by the F\'elix and Tanr\'e rational models for polyhedral products; see Theorems \ref{thm:partial_quotients}, \ref{thm:formality} and \ref{thm:BorelCoh} for more details.

Throughout this article, each space $X$ is assumed to be connected and ($\Q$-)locally finite; that is, the rational cohomology group $H^i(X; \Q)$ is of finite dimension 
for $i\geq 0$.  
In the rest of this section, we describe our main results more precisely.  
Following Kishimoto and Levi \cite{K-L}, we define a polyhedral product with the homotopy colimit instead of the colimit; see also \cite{N-R} for the study of the Davis-Januszkiewicz space with the homotopy colimit functor.

\begin{defn}\label{defn:PolyProd}\text{(\cite[Definition 1.2]{K-L})}
Let 
$(\underline{X}, \underline{A}):= ((X_1, A_1), ..., (X_m, A_m))$ be a tuple of spaces with $A_i \subset X_i$ for each $i$ and 
$K$ a simplicial complex with $m$.  
The {\it polyhedral product} $(\underline{X}, \underline{A})^K$ of the tuple $(\underline{X}, \underline{A})$ corresponding to $K$ is defined by 
$$(\underline{X}, \underline{A})^K:= \displaystyle{\text{hocolim}_{\sigma \in K}(\underline{X}, \underline{A})^\sigma},$$ where 
$(\underline{X}, \underline{A})^\sigma = Y_1\times \cdots \times Y_m$ with 
\[
Y_i = \left\{
\begin{array}{ll}
A_i & i \notin \sigma \\
X_i & i \in \sigma .
\end{array}
\right.
\]
We write $(X, A)^K$ for $(\underline{X}, \underline{A})^K$ if there are a space $X$ and a subspace $A$ such that $X_i = X$ and $A_i =A$ for each $i$. 
\end{defn}

In what follows, we assume that a simplicial complex $K$ has no ghost vertices unless otherwise specified. 

Suppose that each $(X_i, A_i)$ is a pair of CW-complexes. Then, the natural map $\text{colim}_{\tau \in \partial(\sigma)}(\underline{X}, \underline{A})^\tau \to 
(\underline{X}, \underline{A})^\sigma$ is a cofibration. Thus, in view of \cite[\S2 and Proposition 4.8]{P-R} and also \cite[Proposition 8.1.1]{B-P}, we have a weak homotopy equivalence $$(\underline{X}, \underline{A})^K \stackrel{\simeq_w}{\longrightarrow} 
\text{colim}_{\sigma \in K}(\underline{X}, \underline{A})^\sigma = \bigcup_{\sigma \in K}(\underline{X}, \underline{A})^\sigma =: 
\mathcal{Z}_K(\underline{X}, \underline{A}).$$
In particular, by definition, the moment-angle complex $\mathcal{Z}_K(D^2, S^1)$ corresponding to a simplicial complex $K$ 
is the colimit $\bigcup_{\sigma \in K}(D^2, S^1)^\sigma$ and then it is weak homotopy equivalent to the polyhedral product 
$(D^2, S^1)^K$. 

Our first result is concerned with the rational homotopy groups of a polyhedral product of a pair of Lie groups. We denote by $\pi_*(X)_\Q$ the rational homotopy group $\pi_*(X)\otimes \Q$ for a pointed connected space $X$ whose fundamental group is abelian. 

\begin{thm}\label{thm:Rational_homotopy}
Let $G$ be a connected compact Lie group and $i : H \to G$ the inclusion of a maximal rank subgroup. Suppose that $G/H$ is simply connected and $(Bi)^*(x_k)$ is decomposable in $H^*(BH; \Q)$ for each generator $x_k$ of $H^*(BG; \Q)$. Then, one has a short exact sequence of rational homotopy groups 
\[
\xymatrix@C20pt@R12pt{
0 \ar[r] &  \pi_*((G, H)^K)_\Q \ar[r]^{q_*} & \pi_*((G/H, \ast)^K)_\Q \ar[r]^-{\partial_*} & \pi_{*-1}(\Pi^m H)_\Q \ar[r] & 0
}
\]
for arbitrary simplicial complex $K$ with $m$ verticies, 
where $\partial_*$ denotes the connecting homomorphism of the homotopy exact sequence of the middle vertical sequence in (\ref{eq:fibrations}) below.
\end{thm}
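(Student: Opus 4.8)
The plan is to read off the short exact sequence from the long exact homotopy sequence of the middle vertical fibration in (\ref{eq:fibrations}), namely
\[
\Pi^m H \stackrel{j}{\longrightarrow} (G,H)^K \stackrel{q}{\longrightarrow} (G/H,\ast)^K ,
\]
after tensoring with $\Q$. Because $G/H$ is simply connected, the polyhedral product $(G/H,\ast)^K$ is simply connected as well, so all three spaces are nilpotent and the rationalised sequence
\[
\cdots \to \pi_n(\Pi^m H)_\Q \stackrel{j_*}{\to} \pi_n((G,H)^K)_\Q \stackrel{q_*}{\to} \pi_n((G/H,\ast)^K)_\Q \stackrel{\partial_*}{\to} \pi_{n-1}(\Pi^m H)_\Q \to \cdots
\]
is exact. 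Hence the whole theorem is equivalent to the single claim that the fibre inclusion induces the zero map $j_*=0$; granting this, $q_*$ is injective and $\partial_*$ is surjective in each degree, which is precisely the asserted splitting.

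To prove $j_*=0$ I would pass to a relative Sullivan model of the fibration and show that every fibre generator transgresses. The key is the local picture at a single vertex, i.e.\ the fibration $H \to G \to G/H$. Since $H$ and $G$ have maximal rank $l$, the space $G/H$ has only even rational cohomology, and its minimal model is the pure model $(\Q[w_1,\dots,w_l]\otimes \Lambda(u_1,\dots,u_l),d)$ with $dw_j=0$ and $du_k=(Bi)^*(x_k)$; the hypothesis that each $(Bi)^*(x_k)$ is decomposable is exactly what makes this model minimal, so the $w_j$ and $u_k$ are genuine generators. Building the total space $G$ over this base yields a relative model $(\Q[w]\otimes\Lambda(u)\otimes\Lambda(\bar s_1,\dots,\bar s_l),D)$ in which the $\bar s_k$ are the fibre generators (dual to $\pi_{\mathrm{odd}}(H)_\Q$) and the linear part of $D\bar s_k$ is the even base generator $w_k$. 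Thus the linear part of $D$ is injective on the fibre generators, the connecting map is surjective at this level, and $i_*\colon \pi_*(H)_\Q \to \pi_*(G)_\Q$ vanishes.

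Next I would globalise this to arbitrary $K$ using the refined F\'elix--Tanr\'e model of Proposition \ref{prop:PolyProd-model}. Assembling the vertexwise fibrations over the face category of $K$ should produce a relative model of $\Pi^m H \to (G,H)^K \to (G/H,\ast)^K$ whose fibre generators are the $lm$ classes $\bar s_k^{(i)}$ ($1\le k\le l$, $1\le i\le m$), one family per vertex. I would then check that in the model of the base $(G/H,\ast)^K$ the corresponding even classes $w_k^{(i)}$ persist as indecomposables and that the linear part of $D\bar s_k^{(i)}$ is $w_k^{(i)}$, so that the linear part of the relative differential remains injective on the whole fibre. This gives $j_*=0$ and hence the desired exact sequence.

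The main obstacle is precisely this last globalisation: one must verify, from the explicit polyhedral model rather than from the combinatorics of $K$, that the vertexwise transgression survives the gluing, i.e.\ that each $w_k^{(i)}$ is still an honest generator of the model of $(G/H,\ast)^K$ receiving the transgression of $\bar s_k^{(i)}$, with no cross-vertex interference introduced by the higher simplices of $K$. Here the two hypotheses play complementary roles: the maximal-rank condition guarantees that $G/H$ contributes exactly $l$ even generators per vertex, matching the $l$ fibre generators of each copy of $H$, while the decomposability of $(Bi)^*(x_k)$ guarantees minimality of the pure model and hence that these even classes are not cancelled before the transgression is seen.
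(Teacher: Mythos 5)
Your proposal follows essentially the same route as the paper: reduce the theorem to the vanishing of the fibre inclusion on rational homotopy, exhibit a relative Sullivan model of the middle fibration in (\ref{eq:fibrations}) in which each fibre generator of $(\wedge P_H)^{\otimes m}$ transgresses linearly onto a generator of $V_{BH}$ in the base, and use the decomposability of $(Bi)^*(x_k)$ to guarantee those even generators are not absorbed into the contractible part of the minimal model. The globalisation step you flag as the main obstacle is exactly what the paper carries out, by building the minimal model $(\wedge W,d')$ of $(G/H,\ast)^K$ so that $\oplus^m(V_{BH}\oplus P_G)\subset W$ (possible precisely because the linear part of the differential vanishes there, by the decomposability hypothesis) and then forming the pushout (\ref{eq:I}) whose linear part gives the short exact sequence of indecomposables dual, via $\nu$, to the asserted sequence.
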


We stress that the exactness in the theorem above does not depend on any property of the given simplicial complex $K$. 

\begin{rem}
While we do not pursue topics on the cohomology $H^*((G, H)^K; \mathbb{K})$ with coefficients in arbitrary field $\mathbb{K}$, in order to compute the cohomology algebra, 
we may use a commutative diagram 
\begin{eqnarray}\label{eq:fibrations}
\xymatrix@C15pt@R15pt{
(H, H)^K \ar[d] &(H, H)^K  \ar@{=}[r] \ar@{=}[l] \ar[d] & \Pi^m  H  \ar[d] \\
(EG, H)^K \ar[d] & (G, H)^K \ar[r] \ar[l]  \ar[d]^-q&  \Pi^m G \ar[d] \\
(BH, \ast)^K &  (G/H, \ast)^K \ar[r] \ar[l]_j  &  \Pi^m G/H. 
}
\end{eqnarray}
in which vertical sequences are fibrations; see \cite[Lemma 2.3.1]{D-S}.  We can regard the lower squares as pullback diagrams. 
\end{rem}


Before describing our main result on a partial quotient, we recall some terminology in rational homotopy theory. 

A {\it commutative differential graded algebra} (henceforth, CDGA) $(A, d)$ consists of a non-negatively graded algebra $A$ and a differential $d$ on $A$ with degree $+1$. 
Let $A_{PL}(X)$ be the CDGA 
of polynomial differential forms on a space $X$; see \cite[10 (c)]{F-H-T}. 
It is worthwhile mentioning that there exists a morphism of cohain complexes from $A_{PL}(X)$ to the singular cochain algebra of $X$ with coefficients in the rational field $\Q$ which induces an isomorphism of algebras between cohomology algebras; see \cite[10(e) Remark]{F-H-T}. 

By definition, a {\it Sullivan algebra} $(A, d)$ is a CDGA whose underlying algebra $A$ is the free algebra $\wedge W$ generated by a graded vector space $W$ and for which the vector space $W$ admits a filtration 
$W_0 \subset W_1 \subset \cdots W_n \subset \cdots $ with $W= \bigcup_i W_i$, $d(W_0) = 0$ and $d : W_k \to \wedge W_{k-1}$ for $k\geq 1$. 
We say that a Sullivan algebra $(\wedge W, d)$ is {\it minimal} if $d(w)$ is decomposable for each $v \in W$. 

A morphism $\varphi : (A, d) \to (B, d')$ of CDGA's is a {\it quasi-isomorphism} if $\varphi$ induces an isomorphism on cohomology. 
A {\it rational model} $(A, d)$ for a space $X$ is a CDGA which is connected with $A_{PL}(X)$ by using quasi-isomorphisms. We call the rational model $(A, d)$ a {\it (minimal) Sullivan model} for $X$ if it is a (minimal) Sullivan algebra. Observe that each space has a unique minimal Sullivan model; see \cite[14(b) Corollary]{F-H-T}.
A space $X$ is {\it formal} if there exists a sequence of quasi-isomorphisms between a Sullivan model for $X$ and the cohomology $H^*(X; \Q)$ which is regarded as a CDGA with zero differential.   
We refer the reader to the books \cite{G-M}, \cite{F-H-T} and \cite{F-O-T} for rational homotopy theory and its applications to topology and geometry.

\begin{defn}\label{defn:formalizability}\text{(cf. \cite[V]{Thomas})} 
A map $p : E \to B$ is {\it formalizable} if  there exists a commutative diagram up to homotopy 
\[
\xymatrix@C25pt@R15pt{
A_{PL}(B) \ar[r]^{A_{PL}(p)} & A_{PL}(E) \\
(\wedge W, d) \ar[r]^l \ar[d]_{\simeq} \ar[u]^{\simeq}& (\wedge Z, d') \ar[d]^{\simeq}  \ar[u]_{\simeq} \\
H^*(B; \Q)  \ar[r]_{p^*} & H^*(E; \Q) 
}
\]
in which $(\wedge W, d)$ and $(\wedge Z, d')$ are minimal Sullivan algebras and vertical arrows are quasi-isomorphisms; 
see \cite[12 (b)]{F-H-T} and \cite[Chapter 5]{H} for the homotopy relation.  
\end{defn}

For a simplicial complex $K$, define $\mathcal{Z}_K({\mathbb C}, {\mathbb C}^*)$ by the colimit 
$\text{colim}_{\tau\in K}({\mathbb C}, {\mathbb C}^*)^\tau$. 
Then, we have weak equivalences 
$$
(D^2, S^1)^K \stackrel{\simeq_w}{\longrightarrow} 
\text{colim}_{\tau \in K}(D^2, S^2)^\tau \maprightd{i}{\simeq}  \mathcal{Z}_K({\mathbb C}, {\mathbb C}^*), 
$$
where $i$ is the inclusion; see 
\cite[Theorem 4.7.5]{B-P}.
Let $X_\Sigma$ be a compact toric manifold associated with a complete and smooth fan $\Sigma$; see \cite[\S 3.1]{CLS}. We then have a homeomorphism 
$X_\Sigma \cong \mathcal{Z}_K({\mathbb C}, {\mathbb C}^*)/H$ 
via Cox's construction of the manifold, where $K$ is the simplicial complex with $m$ vertices associated with the fan $\Sigma$ and $H$ is a subgroup of the torus $({\mathbb C}^*)^m$ which acts on  $\mathcal{Z}_K({\mathbb C}, {\mathbb C}^*)$ canonically and freely; see \cite[Theorem 5.1.11]{CLS} and \cite[Theorem 5.4.5, Proposition 5.4.6]{B-P}. 
Moreover, the quotient $\mathcal{Z}_K({\mathbb C}, {\mathbb C}^*)/H'$ by a subtorus $H' \subset H$ is called a {\it partial quotient}. 

We recall the pullback diagram in \cite[The proof of Proposition 3.2]{Franz}. Let $X_\Sigma$ be a toric manifold associated with a fan $\Sigma$ and 
$\mathcal{Z}_K({\mathbb C}, {\mathbb C}^*)/H$ Cox's construction of $X_\Sigma$ mentioned above. Then, we have a commutative diagram consisting of two pullbacks 
\begin{eqnarray}\label{eq:pullbacks}
\xymatrix@C20pt@R15pt{
EG \times_H  \mathcal{Z}_K({\mathbb C}, {\mathbb C}^*) \ar[r]^-p\ar[d]_{\pi_H}&  (EG)/H \ar[r]\ar[d]& EL\ar[d] \\
EG \times_{G}  \mathcal{Z}_K({\mathbb C}, {\mathbb C}^*) \ar[r]_-q&  BG \ar[r]_{B\rho}& BL, 
}
\end{eqnarray}
where $G = ({\mathbb C}^*)^m$ and $L =({\mathbb C}^*)^m/H$. We observe that right two vertical maps are principal $L$-bundles and that the maps 
$p$ and $q$ are fibrations associated with the universal $H$-bundle and the universal $G$-bundle, respectively. 
Since the group $H$ acts on $\mathcal{Z}_K({\mathbb C}, {\mathbb C}^*)$ freely, it follows that the Borel construction $EG \times_H  \mathcal{Z}_K({\mathbb C}, {\mathbb C}^*)$ is homotopy equivalent to the toric manifold  $X_\Sigma$. 

Let $H'$ be a subtorus of $H$. 
Then, we may replace $H$ and $L$ in the diagram (\ref{eq:pullbacks}) with $H'$ and $L':= ({\mathbb C}^*)^m/H$, respectively. 
With the replacement,  the upper left corner in the diagram is regarded as the partial quotient 
$\mathcal{Z}_K({\mathbb C}, {\mathbb C}^*)/{H'}\simeq EG \times_{H'}\mathcal{Z}_K({\mathbb C}, {\mathbb C}^*)$.  

\begin{rem} It follows from \cite[Theorems 4.3.2 and 4.7.5]{B-P} that the Borel construction $EG \times_{G}  \mathcal{Z}_K({\mathbb C}, {\mathbb C}^*)$ is homotopy equivalent to the Davis--Januszkiewicz space $DJ(K):= (BS^1, *)^K$. 
Since the fan that we consider is complete, it follows from the result \cite[Theorem 12.1.10]{CLS} that $X_\Sigma$ is simply connected. Then, we have an exact sequence 
$
\xymatrix@C15pt@R10pt{
0 \ar[r] &  \pi_*(X_\Sigma) \ar[r]^-{(\pi_H)_*} & \pi_*(DJ(K)) \ar[r]^-{\partial_*} & \pi_{*-1}(G/H) \ar[r] & 0. 
}
$
By considering the center vertical fibration mentioned in (\ref{eq:fibrations}), 
the exact sequence in Theorem \ref{thm:Rational_homotopy} is regarded as an analogy of the sequence above. 
\end{rem}

The following result characterizes a toric manifold among partial quotients associated with the manifold. 

\begin{thm}\label{thm:partial_quotients} Let $\mathcal{Z}_K({\mathbb C}, {\mathbb C}^*)/H$ be a toric manifold and $H'$ a subtorus of $H$. 
For the partial quotient $\mathcal{Z}_K({\mathbb C}, {\mathbb C}^*)/H'$, the following conditions are equivalent. 
\begin{itemize}
\item[\em (i)] $H = H'$. 
\item[\em (ii)] $H^{\text{\em odd}}(\mathcal{Z}_K({\mathbb C}, {\mathbb C}^*)/H' ; \Q) =0$.
\item[\em (iii)] The map $\pi_{H'} : \mathcal{Z}_K({\mathbb C}, {\mathbb C}^*)/H' \to DJ(K)$  in the diagram (\ref{eq:pullbacks}) is formalizable. 
\end{itemize}
\end{thm}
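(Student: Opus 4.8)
The plan is to exploit the pullback description (\ref{eq:pullbacks}) to present $\pi_{H'}\colon N\to DJ(K)$, where $N:=\mathcal{Z}_K({\mathbb C},{\mathbb C}^*)/H'$, as a principal $L'$-bundle ($L'=({\mathbb C}^*)^m/H'$) and to model it over the Stanley--Reisner ring. Since $DJ(K)\simeq EG\times_G\mathcal{Z}_K({\mathbb C},{\mathbb C}^*)$ is formal with $H^*(DJ(K);\Q)\cong\Q[K]$ (\cite{N-R}), and $L'$ is a complex torus of rank $r:=m-\dim_{\mathbb C}H'$ with $BL'\simeq_\Q({\mathbb C}{\mathbb P}^\infty)^r$, a Félix--Tanré/Koszul model for $N$ is the relative Sullivan algebra $(\Q[K]\otimes\wedge(v_1,\dots,v_r),D)$ with $|v_j|=1$ and $Dv_j=\bar\rho_j$, where $\bar\rho_j\in\Q[K]^2$ is the image of a generator $s_j\in H^2(BL')$ under $c'^*=q^*\circ(B\rho')^*$. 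First I would record two facts: (a) the $\bar\rho_j$ are linearly independent in $\Q[K]^2$, because $(B\rho')^*$ is injective (dual to the torus surjection $G\twoheadrightarrow L'$) and $q^*$ is an isomorphism in degree $2$ (no ghost vertices forces $I_K$ to start in degree $4$); (b) since $H'\subset H$, the surjection $L'\twoheadrightarrow L$ lets me choose $s_1,\dots,s_n$ pulling back from $H^*(BL)$, so that $\bar\rho_1,\dots,\bar\rho_n$ is exactly the linear system of parameters realizing $H^*(X_\Sigma;\Q)=\Q[K]/(\bar\rho_1,\dots,\bar\rho_n)$ (Danilov--Jurkiewicz; \cite{CLS}). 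Here $n=\dim_{\mathbb C}X_\Sigma$, and $r\ge n$ with $r=n$ iff $\dim H'=\dim H$ iff $H'=H$. I would then prove the cycle (i)$\Rightarrow$(iii)$\Rightarrow$(ii)$\Rightarrow$(i).

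For (ii)$\Rightarrow$(i) I argue by contraposition, assuming $H'\ne H$, i.e. $s:=r-n\ge1$. Because $|K|$ is a sphere the ring $\Q[K]$ is Cohen--Macaulay (\cite{B-P}), so the l.s.o.p. $\bar\rho_1,\dots,\bar\rho_n$ is a regular sequence; the Koszul quasi-isomorphism $(\Q[K]\otimes\wedge(v_1,\dots,v_n),D)\xrightarrow{\simeq}(A,0)$ with $A:=H^*(X_\Sigma;\Q)$ lets me reduce the model of $N$ to $(A\otimes\wedge(v_{n+1},\dots,v_r),D')$, where $D'v_j$ is the image of $\bar\rho_j$ in $A^2$. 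Now $A$ is a finite-dimensional Poincaré-duality algebra concentrated in even degrees, so its Euler characteristic is $\chi(A)=\dim_\Q A>0$, whereas $\chi(\wedge(v_{n+1},\dots,v_r))=(1-1)^s=0$. By the Euler--Poincaré principle $\dim H^{\mathrm{even}}(N;\Q)-\dim H^{\mathrm{odd}}(N;\Q)=\chi(A)\cdot 0=0$; since $H^0=\Q\ne0$ this forces $H^{\mathrm{odd}}(N;\Q)\ne0$, negating (ii). (Conversely $r=n$ gives $H^*(N;\Q)=A$, which also yields (i)$\Rightarrow$(ii).)

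For (i)$\Rightarrow$(iii), when $H'=H$ the sequence $\bar\rho_1,\dots,\bar\rho_n$ is regular, so the projection $\psi_0\colon(\Q[K]\otimes\wedge(v_1,\dots,v_n),D)\to(\Q[K]/(\bar\rho_1,\dots,\bar\rho_n),0)=(H^*(X_\Sigma;\Q),0)$ is a quasi-isomorphism and restricts on $\Q[K]$ to the quotient map $\pi_H^*$. Feeding $\psi_0$ and the formality of $DJ(K)$ into Definition \ref{defn:formalizability}, together with a minimal-model replacement (\cite[\S12]{F-H-T}), produces the required homotopy-commutative square, so $\pi_H$ is formalizable.

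The crux is (iii)$\Rightarrow$(ii), and here the governing observation is that $H^1(N;\Q)=0$. Indeed, in the model $(\Q[K]\otimes\wedge(v_j),D)$ the degree-one cocycles are the combinations $\sum c_jv_j$ with $\sum c_j\bar\rho_j=0$, and there are no degree-one coboundaries, so $H^1(N;\Q)$ measures exactly the linear relations among the $\bar\rho_j$; by (a) there are none. I would then use that formalizability is a weak-equivalence invariant of the map and that the base already coincides with its cohomology, in order to reduce the data of Definition \ref{defn:formalizability} to a single CDGA quasi-isomorphism $\psi\colon(\Q[K]\otimes\wedge(v_j),D)\to(H^*(N;\Q),0)$ restricting to $\pi_{H'}^*$ on $\Q[K]$. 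Since $\psi(v_j)\in H^1(N;\Q)=0$, the map $\psi$ kills every fibre generator and therefore factors through $\pi_{H'}^*\colon\Q[K]\to H^*(N;\Q)$, whose image lies in $H^{\mathrm{even}}(N;\Q)$; as $\psi$ is surjective on cohomology this gives $H^{\mathrm{odd}}(N;\Q)=0$, i.e. (ii). The main obstacle I anticipate is justifying this reduction rigorously: one must check that formalizability in the sense of Definition \ref{defn:formalizability}, phrased with minimal Sullivan models, is equivalent to the existence of the compatible quasi-isomorphism $\psi$ out of the non-minimal, but cofibrant-over-$\Q[K]$, relative model above. I would handle this by replacing $\Q[K]$ by a genuine minimal model $(\wedge W,d)\xrightarrow{\simeq}\Q[K]$ of $DJ(K)$, transferring $D$ along it, and invoking the lifting and homotopy properties of relative Sullivan algebras (\cite[\S14]{F-H-T}); the vanishing of $H^1(N;\Q)$ is precisely what leaves the transferred fibre generators no freedom, and this is the true source of the obstruction to formalizability whenever $H^{\mathrm{odd}}(N;\Q)\ne0$.
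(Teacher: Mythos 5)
Your argument is correct in substance, but it closes the cycle of implications differently from the paper and replaces two of its key devices. The paper proves (i)$\Rightarrow$(ii)$\Rightarrow$(iii)$\Rightarrow$(i): its step (ii)$\Rightarrow$(iii) is essentially your (i)$\Rightarrow$(iii) (regularity of the sequence $d(x_1),\dots,d(x_l)$ obtained from $H^{\text{odd}}=0$ via Proposition \ref{prop:Baum}, then the quasi-isomorphism onto $SR(K)/(d(x_i))$ as in Proposition \ref{prop:relatively_formalizable}), while its step (iii)$\Rightarrow$(i) runs by contradiction: a Poincar\'e-series count shows that for $H'\subsetneq H$ the sequence $d(x_1),\dots,d(x_l)$ cannot be regular, Baum's criterion then yields a nonzero class $w=\sum_j u_jx_j-z$ in $\text{Tor}^{-1}$, and the quasi-isomorphism $\eta$ furnished by formalizability (via \cite[Proposition 2.3.4]{Vigue}) is shown to kill it. You instead prove (iii)$\Rightarrow$(ii) directly --- once the degree-one fibre generators are killed (because $H^1=0$, which you correctly deduce from the linear independence of the $\bar\rho_j$) and the odd-degree generators of the minimal model of $DJ(K)$ are killed (because the formality quasi-isomorphism of the base annihilates them, $H^{\text{odd}}(DJ(K);\Q)$ being zero), the image of $\psi$ lies in even degrees, and surjectivity on cohomology forces $H^{\text{odd}}=0$ --- and you supply the remaining implication (ii)$\Rightarrow$(i) by an Euler-characteristic count on the reduced model $(A\otimes\wedge(v_{n+1},\dots,v_r),D')$, which vanishes while $H^0\neq 0$. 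This genuinely simplifies the hardest step: no Tor class and no Poincar\'e series are needed. The price is that you import regularity of the linear system of parameters from Cohen--Macaulayness of $SR(K)$ ($|K|$ a simplicial sphere plus Reisner's criterion), where the paper derives it from Proposition \ref{prop:Baum} together with \cite[Theorem 12.3.11]{CLS}. The one point you must write out carefully is the one you flag yourself: Definition \ref{defn:formalizability} only provides a homotopy-commutative square of minimal Sullivan models, and your even-degree argument needs $\psi\circ i=\pi_{H'}^*\circ\phi$ on the nose on the odd-degree part of $W$ (those generators are not cocycles, so homotopic maps into $(H^*(N;\Q),0)$ need not agree on them); you must rectify the square along the KS-extension $i$ before concluding $\psi(V^{\text{odd}})=0$, which is precisely what the paper outsources to \cite[Proposition 2.3.4]{Vigue}.
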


\begin{rem}
As seen in Theorem \ref{thm:formality}, a toric manifold is formal. However, we do not know whether a general partial quotient is formal.  
\end{rem}

An outline for the article is as follows. Section \ref{sect:F-T} recalls the construction of the F\'elix--Tanr\'e rational model for a polyhedral product and 
discusses the naturality of the models. In Section \ref{sect:TractableModels}, we give a tractable rational model for a polyhedral product and some examples for the model. 
Section \ref{sect:Liegroups} constructs a rational model for the polyhedral product $(G, H)^K$ of a pair of Lie group and closed subgroup corresponding to arbitrary simplicial complex $K$. With the model, we prove 
Theorem \ref{thm:Rational_homotopy}. In Section \ref{sect:formality}, we show that every compact toric manifold is formal. Section \ref{sect:formalizability} is devoted to proving Theorem \ref{thm:partial_quotients}.


\section{A recollection of the F\'elix--Tanr\'e rational models for polyhedral products}\label{sect:F-T}

 While the construction of a rational model in \cite{F-T} for a polyhedral product is defined by the colimit construction, 
 it is also applicable in constructing a rational model for $(\underline{X}, \underline{A})^K$ obtained 
 by the homotopy colimit as in Definition \ref{defn:PolyProd}. 
In this section, we summarize the result. 

Let $\iota_j : A_j \to X_j$ be the inclusion and $\varphi_j : \mathcal{M}_j \to \mathcal{M}_j'$ a surjective model\footnote{The existence of the model: We consider a Sullivan representative for $\iota_j$; see \cite[page 154]{F-H-T}.  The proof of \cite[Lemma 13.4]{F-H-T}  enables us to replace the homotopy commutative diagram of the representative with a strictly commutative diagram. By applying the surjective trick (\cite[\S 12 (b)]{F-H-T}), we have a surjective model for the inclusion.} for $\iota_j$, namely, an epimorphism of CDGA's which fits in  a commutative diagram
\begin{eqnarray}\label{eq:SurjectiveModels}
\xymatrix@C20pt@R15pt{
\wedge W_j  \ar[r]^-{\varphi_j} \ar[d]_{u}^\simeq & \wedge V_j  \ar[d]^v_\simeq\\
A_{PL}(X_j) \ar@{->>}[r]_{\iota_j^*} & A_{PL}(A_j)
}
\end{eqnarray}
with quasi-isomorphisms $u$ and $v$. We observe that $\iota_j^*$ is surjective; see \cite[Proposition 10.4 , Lemma 10.7]{F-H-T}. 
For each $\tau \notin K$, 
let $I_\tau$ denote the ideal of $\bigotimes_{i=1}^m\mathcal{M}_i$ defined by $I_\tau = E_1\otimes \cdots \otimes E_m$, where 
\[
E_i = \left\{
\begin{array}{ll}
\mathcal{M}_i & i \notin \tau \\
\text{Ker} \ \varphi_i & i \in \tau .
\end{array}
\right.
\]
 
\begin{thm}\text{\em (\cite[Theorem 1]{F-T})}\label{thm:F-T}
There is a sequence of quasi-isomorphisms connecting the CDGA $A_{PL}((\underline{X}, \underline{A})^K)$ and the quotient $(\bigotimes_{i=1}^m\mathcal{M}_i)/ J(K)$, where 
$J(K) := \sum_{\tau \notin K} I_\tau$; that is, the quotient is a rational model for $(\underline{X}, \underline{A})^K$. 
\end{thm}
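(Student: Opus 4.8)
The plan is to prove Theorem~\ref{thm:F-T} by exhibiting the quotient $(\bigotimes_{i=1}^m\mathcal{M}_i)/J(K)$ as a model for the homotopy colimit defining $(\underline{X}, \underline{A})^K$, compatibly with the diagram structure over the face poset of $K$. The key observation is that for each face $\sigma \in K$, the product $(\underline{X}, \underline{A})^\sigma = Y_1 \times \cdots \times Y_m$ has a natural CDGA model given by a tensor product $\bigotimes_{i=1}^m M_i^\sigma$, where $M_i^\sigma = \wedge W_i$ (i.e.\ $\mathcal{M}_i$) when $i \in \sigma$ and $M_i^\sigma = \wedge V_i$ (i.e.\ $\mathcal{M}_i'$) when $i \notin \sigma$, using that $A_{PL}$ sends finite products to tensor products up to quasi-isomorphism and that $\varphi_i : \mathcal{M}_i \to \mathcal{M}_i'$ is a model for the inclusion $\iota_i$. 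Under this identification, the restriction maps induced by face inclusions $\tau \subseteq \sigma$ are the evident tensor products of identities and copies of $\varphi_i$.

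\medskip

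\noindent\textbf{Main steps.} First I would set up the contravariant functor on the face category of $K$ sending $\sigma$ to $\bigotimes_i M_i^\sigma$, and verify (using the commutativity of \eqref{eq:SurjectiveModels} and the monoidal naturality of $A_{PL}$ from \cite[\S 12]{F-H-T}) that this is a diagram of CDGA's quasi-isomorphic, objectwise, to $\sigma \mapsto A_{PL}((\underline{X}, \underline{A})^\sigma)$. Second, since $(\underline{X}, \underline{A})^K$ is a homotopy colimit, its $A_{PL}$ is computed as the homotopy limit of the corresponding diagram of CDGA's; the surjectivity of each $\varphi_i$ (guaranteed by the surjective-model construction in the footnote) makes the diagram fibrant enough that this homotopy limit is computed by the strict limit. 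Third, I would identify that strict limit algebraically: an element of $\lim_\sigma \bigotimes_i M_i^\sigma$ is a compatible family of tensors, and compatibility across all faces forces exactly the relations defining $J(K) = \sum_{\tau \notin K} I_\tau$. Concretely, $\prod_\sigma \bigotimes_i M_i^\sigma$ surjects onto the limit, and the limit is isomorphic to the quotient $(\bigotimes_i \mathcal{M}_i)/J(K)$ because a tensor $E_1 \otimes \cdots \otimes E_m$ with $E_i \in \operatorname{Ker}\varphi_i$ for $i \in \tau$ restricts to zero on every face containing $\tau$, and such tensors for the non-faces $\tau \notin K$ generate precisely the kernel of the natural map $\bigotimes_i \mathcal{M}_i \to \lim_\sigma \bigotimes_i M_i^\sigma$.

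\medskip

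\noindent\textbf{The hard part} will be the third step: showing that the kernel of the canonical map from $\bigotimes_i \mathcal{M}_i$ to the limit is \emph{exactly} $J(K)$, rather than merely containing it. One inclusion ($J(K) \subseteq \ker$) is the easy direction sketched above. For the reverse inclusion one must show that any tensor vanishing on all faces $\sigma \in K$ already lies in $\sum_{\tau \notin K} I_\tau$; this requires a careful inclusion--exclusion or minimal-non-face argument over the simplicial structure, using that $\varphi_i$ is surjective (so $\mathcal{M}_i \cong \operatorname{Ker}\varphi_i \oplus (\text{a lift of }\mathcal{M}_i')$ as graded vector spaces) to split each factor and reduce the compatibility conditions to a combinatorial statement about which coordinate-subsets $\tau$ are non-faces. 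I would handle this by decomposing $\bigotimes_i \mathcal{M}_i = \bigoplus_{\tau \subseteq [m]} \bigotimes_{i\in\tau}\operatorname{Ker}\varphi_i \otimes \bigotimes_{i\notin\tau}(\text{lift})$ and checking that the summands indexed by $\tau \notin K$ are exactly those killed in the limit. Finally, one confirms that the resulting isomorphism is a quasi-isomorphism and assembles the zig-zag of quasi-isomorphisms connecting $A_{PL}((\underline{X}, \underline{A})^K)$ to $(\bigotimes_i \mathcal{M}_i)/J(K)$, which completes the identification as a rational model.
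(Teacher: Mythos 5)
Your proposal is correct and follows essentially the same route as the paper: objectwise models $\widetilde{D}^\sigma=\bigotimes_{i\in\sigma}\mathcal{M}_i\otimes\bigotimes_{i\notin\sigma}\mathcal{M}_i'$ for the pieces $(\underline{X},\underline{A})^\sigma$, passage from the homotopy colimit of spaces to the limit of CDGA's over the face poset (where the surjectivity of the $\varphi_i$ supplies the needed fibrancy, as in \cite[Proposition 4.8]{P-R}), and identification of that limit with $(\bigotimes_i\mathcal{M}_i)/J(K)$. The only divergence is in the last step, where you establish the map $\alpha$ to the strict limit as an honest isomorphism via the splitting $\mathcal{M}_i\cong\operatorname{Ker}\varphi_i\oplus(\text{lift of }\mathcal{M}_i')$ and the resulting decomposition indexed by subsets $\tau\subseteq[m]$ --- which does go through --- whereas the paper, following F\'elix--Tanr\'e, obtains $\alpha$ as a quasi-isomorphism by induction over the simplices of $K$ using \cite[Lemma 13.3]{F-H-T}.
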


In what follows, we may call the the quotient CDGA in Theorem  \ref{thm:F-T} the {\it F\'elix--Tanr\'e (rational) model} for the polyhedral product $(\underline{X}, \underline{A})^K$. 

\begin{rem}\label{rem:F-T}  
We observe that the polyhedral product $(\underline{X}, \underline{A})^K$ is defined by the homotopy colimit on the diagram associated to the simplicial complex $K$. 
While the nilpotency of each space in the pairs $(X_i, A_i)$ of CW complexes for $1 \leq i \leq m$ is assumed in \cite[Theorem 1]{F-T}, 
the conditions is not required in Theorem \ref{thm:F-T}. 
In fact, for each inclusion $\iota_j : A_j \to X_j$, we have a commutative diagram 
\begin{eqnarray}\label{eq:replacement}
\xymatrix@C20pt@R15pt{
A_j \ar[r]^-{\iota_j} & X_j \\
|S(A_j)| \ar[r]_{|S(\iota)|}\ar[u]^{\simeq}& |S(X_j)| \ar[u]_{\simeq}
}
\end{eqnarray}
with the singular simplex functor $S( \ )$ and the realization functor $| \ |$. Then, this enables us to obtain a sequence of weak homotopy equivalences 
\begin{eqnarray}\label{eq:homotopy_colimit}
\xymatrix@C20pt@R15pt{
 \hspace{1.0cm} (\underline{X}, \underline{A})^K & (\underline{X'}, \underline{A'})^K \ar[l]_-{\simeq_w} \ar[r]^-{\simeq_w} &
\text{colim}_{\sigma \in K}(\underline{X'}, \underline{A'})^\sigma = \bigcup_{\sigma \in K}(\underline{X'}, \underline{A'})^\sigma,
}
\end{eqnarray}
where each pair $(X_i', A_i')$ denotes the pair $( |S(X_j)|, |S(A_j)| )$; see the paragraph after Definition \ref{defn:PolyProd}. 
A surjective model for each inclusion $A_j \to X_j$ is regarded as that for the inclusion $A_j' \to X_j'$. Thus, with the models and by applying  \cite[Proposition 13.5]{F-H-T}  inductively as in the proof of \cite[Theorem 1]{F-T}, we can prove Theorem \ref{thm:F-T} without assuming that spaces 
$X_i$ and $A_i$ are CW-complexes and nilpotent. 
\end{rem}

In order to confirm the naturality of the model in Theorem \ref{thm:F-T} with respect to an inclusion of simplicial complexes and also given surjective models, 
the outline of the proof of Theorem \ref{thm:F-T} is recalled below.  

By using surjective models $\varphi_j : \mathcal{M}_j \to \mathcal{M}_j'$, for each $\sigma \in K$, 
we have a CDGA $\widetilde{D}^\sigma := \bigotimes_{i \in \sigma} \mathcal{M}_i \otimes \bigotimes_{i \notin \sigma} \mathcal{M}_i'$
and a map $\xi_\sigma : (\bigotimes_{i=1}^m\mathcal{M}_i)/ J(K) \to \widetilde{D}^\sigma$ of CDGA's defined by 
\[
\xi_\sigma(x_i ) = \left\{
\begin{array}{ll}
x_i & i \in \sigma \\
\varphi_i(x_i)   & i \notin \sigma .
\end{array}
\right.
\]
It is readily seen that $\xi_\sigma$ is well defined. 
The induction argument in the proof of \cite[Theorem 1]{F-T} yields that the maps $\xi_\sigma$ of CDGA's give rise to a quasi-isomorphism 
$$\alpha : (\bigotimes_{i=1}^m\mathcal{M}_i)/ J(K) \stackrel{\simeq}{\to} \text{lim}_{\sigma \in K}\widetilde{D}^\sigma.$$ 
We also observe that the fact is proved by using \cite[Lemma 13.3]{F-H-T} which gives a well-defined quasi-isomorphism between appropriate pullback diagrams in the category of CDGA's. 
Thus, with the same notation as in Remark \ref{rem:F-T}, we have a sequence of quasi-isomorphisms
\begin{eqnarray}\label{eq:The_sequence}
\xymatrix@C15pt@R15pt{ 
A_{PL}((\underline{X}, \underline{A})^K) \ar[r]^-{\simeq} & A_{PL}^*((\underline{X'}, \underline{A'})^K) & A_{PL}(\text{colim}_{\sigma \in K}(\underline{X'}, \underline{A'})^\sigma) \ar[lld]^-{\eta}_-{\simeq}  \ar[l]_-{\simeq}\\
\text{lim}_{\sigma \in K} A_{PL}((\underline{X'}, \underline{A'})^\sigma)&
\text{lim}_{\sigma \in K}\widetilde{D}^\sigma \ar[l]_-{\Phi}^-{\simeq} & (\bigotimes_{i=1}^m\mathcal{M}_i)/ J(K) \ar[l]_-{\alpha}^-{\simeq}
}
\end{eqnarray}
in which the first two quasi-isomorphisms are induced by the weak equivalences in (\ref{eq:homotopy_colimit}), $\Phi$ is defined by the surjective models 
$\varphi_i$ and $\eta$ is induced by natural maps $(\underline{X}, \underline{A})^\sigma \to (\underline{X}, \underline{A})^K$. 
It follows from \cite[Proposition 4.8]{P-R} and \cite[Proposition 8.1.4]{B-P} that $\Phi$ and $\eta$ are quasi-isomorphisms, respectively. 
This enables us to obtain the rational model for the polyhedral product $(\underline{X}, \underline{A})^K$ in Theorem \ref{thm:F-T}. Moreover, 
the construction above of the model yields the following proposition. 

\begin{prop}\label{prop:naturality}
The F\'elix--Tanr\'e rational models are natural with respect to surjective models which are used in constructing the models of polyhedral products and 
an inclusion of simplicial complexes.  
 \end{prop}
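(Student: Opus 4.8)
The plan is to show that every arrow in the zig-zag \eqref{eq:The_sequence} is one component of a functorial construction, so that naturality of the F\'elix--Tanr\'e model reduces to checking, piece by piece, that the two kinds of morphisms---those induced by a morphism of surjective-model data $(\varphi_j)$ and those induced by an inclusion $K\hookrightarrow L$ of simplicial complexes---are compatible with each arrow. Since the target $A_{PL}((\underline{X},\underline{A})^K)$ and the source $(\bigotimes_i\mathcal{M}_i)/J(K)$ sit at the two ends of this fixed zig-zag, it suffices to produce, for each morphism, a commuting ladder whose rungs are the induced maps between the corresponding objects of \eqref{eq:The_sequence}.

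First I would treat a simplicial inclusion $K\subseteq L$. On the algebraic side the face poset of $K$ is an induced subposet of that of $L$, so restricting the diagram $\widetilde{D}^\bullet$ along this inclusion yields the canonical map $\mathrm{lim}_{\sigma\in L}\widetilde{D}^\sigma \to \mathrm{lim}_{\sigma\in K}\widetilde{D}^\sigma$; correspondingly $\{\tau\notin L\}\subseteq\{\tau\notin K\}$ gives $J(L)\subseteq J(K)$ and hence a quotient map $(\bigotimes_i\mathcal{M}_i)/J(L)\to(\bigotimes_i\mathcal{M}_i)/J(K)$. By the defining formula, each $\xi_\sigma$ for $\sigma\in K$ is the composite of $\xi_\sigma$ for $\sigma\in L$ with this quotient map, so the square relating the two copies of $\alpha$ commutes. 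On the topological side the inclusion of index categories induces $(\underline{X},\underline{A})^K\hookrightarrow(\underline{X},\underline{A})^L$, together with the analogous inclusions of the unions and of the replacements in \eqref{eq:homotopy_colimit}; applying $A_{PL}$ and using that $\eta$ is induced by the structure maps $(\underline{X},\underline{A})^\sigma\to(\underline{X},\underline{A})^K$, which are natural in $K$, the remaining squares commute by functoriality of $A_{PL}$ and of limits over posets (in particular $\Phi$, being a map of such limits, is natural in $K$).

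Next I would treat a morphism of surjective-model data, that is, a family of CDGA maps fitting \eqref{eq:SurjectiveModels} for two choices and commuting with the $\varphi_j$ and with the resolutions $u_j,v_j$. Such a family carries $\mathrm{Ker}\,\varphi_i$ into the corresponding kernel, hence $I_\tau$ into $I_\tau$ and $J(K)$ into $J(K)$, so it induces a map of quotients; it induces maps $\widetilde{D}^\sigma\to\widetilde{D}^\sigma$ commuting with the $\xi_\sigma$, hence with $\alpha$; and it is compatible with $\Phi$ precisely because $\Phi$ is assembled from the quasi-isomorphisms $u_j,v_j$ of \eqref{eq:SurjectiveModels}. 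The map $\eta$ and the weak equivalences of \eqref{eq:homotopy_colimit} either do not involve the models or are matched through $\Phi$ and $\alpha$, so the ladder again commutes.

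The only step that is not purely formal is the naturality of $\alpha$, since $\alpha$ is produced by the inductive pullback argument of \cite[Theorem 1]{F-T} based on \cite[Lemma 13.3]{F-H-T} rather than exhibited as a literal universal map. The resolution I would give is that the construction forces $\pi_\sigma\circ\alpha=\xi_\sigma$ for every $\sigma\in K$, where $\pi_\sigma$ denotes the projection from the limit; since a map into a limit is determined by its composites with the projections, naturality of $\alpha$ follows at once from the manifestly natural formula for $\xi_\sigma$, in both the simplicial-inclusion and the surjective-model variables. Once this is observed, every square of the required ladders commutes and the proposition follows.
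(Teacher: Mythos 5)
Your argument is correct and takes essentially the same route as the paper: the paper justifies Proposition \ref{prop:naturality} simply by observing that the sequence (\ref{eq:The_sequence}) is assembled from the manifestly natural maps $\xi_\sigma$, $\Phi$, $\eta$ and functorial (co)limits over the face poset, which is precisely what you verify arrow by arrow for the two kinds of morphisms. Your resolution of the only delicate point---that $\alpha$ is the map into the limit determined by $\pi_\sigma\circ\alpha=\xi_\sigma$, the induction of \cite[Theorem 1]{F-T} serving only to show it is a quasi-isomorphism---is the correct reading of the paper's construction.
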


The rational cohomology of the moment-angle complex $\mathcal{Z}_K(D^2, S^1)$ is isomorphic to the torsion product 
$\text{Tor}_{\Q[t_1, ..., t_m]}(\Q[t_1, ..., t_m]/I(K), \Q)$, where $\deg t_i =2$ and $I(K)$ denotes  
 the ideal generated by monomials $t_{i_1}\cdots t_{i_s}$ for $\{i_1, ..., i_s\}\notin K$, 
 which is called the {\it Stanley--Reisner ideal} associated with $K$; see \cite[10.1]{Franz}. 
Thus, a CDGA of the form 
\begin{eqnarray}\label{eq:MAC}
(\wedge (x_i, ..., x_m)\otimes \Q[t_1, \cdots, t_m]/I(K),  d(x_i) = t_i)
\end{eqnarray}
computes the cohomology algebra $H^*(\mathcal{Z}_K(D^2, S^1); \Q)$. Here  $SR(K)$ denotes the {\it Stanley--Reisner algebra} $\Q[t_1, \cdots, t_m]/I(K)$. 

\begin{rem}\label{rem:RS} 
The inclusion $i : S^1 \to D^2$ admits a surjective model of the form $\pi : (\wedge (x, t), d) \to (\wedge (x), 0)$, where $\pi$ is the projection, $d(x) = t$ and $\deg x = 1$. By virtue of Theorem \ref{thm:F-T}, we see that the CDGA (\ref{eq:MAC}) above 
is a rational model for  $\mathcal{Z}_K(D^2, S^1)$. 
\end{rem}

\begin{ex}\label{ex:DJ}
Let $K$ be a simplicial complex with $m$ vertices and $j : K \to 2^{[m]}$ the inclusion. The map $j$ induces the inclusion 
$\widetilde{j} : (BS^1, *)^K \to \Pi^m(BS^1)$. We choose the projection $(\wedge (t), 0) \to \Q$ as a surjective model for the inclusion $* \to BS^1$, 
where $\deg t_i = 2$ for $i = 1, ..., m$. By Theorem \ref{thm:F-T} and 
Proposition \ref{prop:naturality}, we have a model $(\wedge(t_1, ..., t_m), 0) \to (\wedge(t_1, ..., t_m)/I(K), 0)= (SR(K), 0)$  
for $\widetilde{j}$ which is the natural projection. As a consequence, we see that the inclusion $\widetilde{j}$ is formalizable in the sense of Definition \ref{defn:formalizability}. 
\end{ex}

\section{Comparatively tractable rational models for polyhedral products}\label{sect:TractableModels}

The F\'elix--Tanr\'e rational model for a polyhedral product $(\underline{X}, \underline{A})^K$ depends on the choice of surjective models for the inclusions in the given tuple $(\underline{X}, \underline{A})$. While the model is complicated in general, the underlying algebra is adjustable in the sense of Theorem \ref{thm:tractableForm} below. In fact, we show that the underlying algebra of the model 
has a particular form which is regarded as a generalization of the rational model for a moment-angle complex; 
see Remark \ref{rem:RS}.

We recall the CDGA in (\ref{eq:MAC}). With this mind, we may call a CDGA $(A, d)$ a {\it Stanley--Reisner (SR) $K$-type}
if the underlying algebra $A$ is of the form 
\[
\bigotimes_{j=1}^m(\wedge V_j\otimes B_j))\big / \big.(b_{j_1}\cdots b_{j_s}\mid b_j \in B_j^+, \{j_1, ..., j_s\}\notin K), 
\]
where $B_j$ is a free commutative algebra. The term `K-' may be omitted if it is clear from the context. 

\begin{thm}\label{thm:tractableForm}
Each polyhedral product $(\underline{X}, \underline{A})^K$ has a SR type CDGA model; that is, there is a sequence of quasi-isomorphisms of CDGA's connecting $A_{PL}((\underline{X}, \underline{A})^K)$ and a Stanley-Reisner $K$-type CDGA. 
\end{thm}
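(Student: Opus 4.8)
The plan is to exploit the structure of the F\'elix--Tanr\'e model already established in Theorem \ref{thm:F-T} together with the freedom in choosing surjective models guaranteed by the footnote construction, and then to reshape the underlying algebra of each tensor factor $\mathcal{M}_j$ into the required Stanley--Reisner form. Concretely, the F\'elix--Tanr\'e model is the quotient $(\bigotimes_{i=1}^m \mathcal{M}_i)/J(K)$, where each $\varphi_j : \mathcal{M}_j \to \mathcal{M}_j'$ is a surjective model for $\iota_j : A_j \to X_j$. The target CDGA in the statement has underlying algebra $\bigotimes_{j=1}^m(\wedge V_j \otimes B_j)$ modulo the monomial relations indexed by the non-faces of $K$, so the first task is to arrange that each $\mathcal{M}_j$ already has underlying algebra of the form $\wedge V_j \otimes B_j$ with $B_j$ a free commutative algebra, and that $\operatorname{Ker}\varphi_j$ is precisely the ideal $(B_j^+)$ generated by the positive-degree part of $B_j$.

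First I would construct, for each inclusion $\iota_j : A_j \to X_j$, a surjective model of this special shape. The idea is to take a minimal Sullivan model $\wedge V_j$ for $A_j$ (this is the codomain $\mathcal{M}_j' = \wedge V_j$), and then build a relative Sullivan model of the inclusion, which produces a CDGA of the form $\wedge V_j \otimes \wedge B_j =: \mathcal{M}_j$ equipped with the obvious projection $\varphi_j : \wedge V_j \otimes \wedge B_j \to \wedge V_j$ sending $B_j$ to zero. Here $B_j$ is the free graded-commutative algebra on the new generators, so it is a free commutative algebra as required, and $\operatorname{Ker}\varphi_j = (B_j^+)$ by construction. That $\varphi_j$ is a quasi-isomorphism onto a model of $A_j$, fitting into diagram (\ref{eq:SurjectiveModels}), follows from the standard relative-Sullivan-model machinery of \cite[\S14]{F-H-T}; the surjective trick (\cite[\S12(b)]{F-H-T}) cited in the footnote is exactly what lets us normalize to this projection form. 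By Proposition \ref{prop:naturality} the F\'elix--Tanr\'e model is natural with respect to the chosen surjective models, so we are free to use these particular ones without changing the quasi-isomorphism type.

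Second I would identify the resulting quotient with a Stanley--Reisner type CDGA. With the choices above, the ideal $I_\tau = E_1 \otimes \cdots \otimes E_m$ has $E_i = \operatorname{Ker}\varphi_i = (B_i^+)$ for $i \in \tau$, and $J(K) = \sum_{\tau \notin K} I_\tau$. I would check that, inside $\bigotimes_j(\wedge V_j \otimes B_j)$, the ideal $J(K)$ is generated exactly by the products $b_{j_1}\cdots b_{j_s}$ with $b_{j_k} \in B_{j_k}^+$ and $\{j_1,\dots,j_s\}\notin K$: a non-face $\tau \notin K$ contributes the product of the ideals $(B_i^+)$ over $i \in \tau$, which is generated by such monomials, and conversely every listed monomial lies in some $I_\tau$ with $\tau = \{j_1,\dots,j_s\}\notin K$. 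Hence $(\bigotimes_j \mathcal{M}_j)/J(K)$ is literally the algebra displayed in the definition of SR $K$-type, carrying the differential inherited from the tensor product. Combining this identification with the sequence of quasi-isomorphisms of Theorem \ref{thm:F-T} yields the desired chain connecting $A_{PL}((\underline{X},\underline{A})^K)$ to a Stanley--Reisner $K$-type CDGA.

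The main obstacle I anticipate is the first step: verifying that every inclusion $\iota_j$ admits a surjective model whose kernel is exactly the augmentation ideal of a \emph{free} commutative algebra $B_j$, rather than merely some surjective model with a less structured kernel. The relative Sullivan model always gives a free extension $\wedge V_j \otimes \wedge B_j$, but one must be careful that the projection killing $B_j$ is a genuine CDGA map (i.e.\ that $d(B_j) \subseteq \wedge V_j \otimes \wedge B_j$ with image landing appropriately so the quotient differential is well defined) and that it is the \emph{surjective} model in the sense of diagram (\ref{eq:SurjectiveModels}), compatible with $\iota_j^*$ up to the quasi-isomorphisms $u,v$. Once this normalization is in place, the algebraic identification of $J(K)$ with the monomial ideal is essentially bookkeeping, and the appeal to Theorem \ref{thm:F-T} and Proposition \ref{prop:naturality} is immediate.
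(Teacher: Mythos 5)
Your overall strategy coincides with the paper's: normalize each surjective model $\varphi_j$ so that it becomes the projection $\wedge V_j\otimes B_j\to \wedge V_j$ with kernel the ideal generated by $B_j^+$, after which the identification of $J(K)$ with the monomial ideal indexed by the non-faces of $K$ is, as you say, bookkeeping, and Theorem \ref{thm:F-T} finishes the argument. Your second step is correct as written.

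The gap is in the first step, and it is exactly the obstacle you flag without resolving. A relative Sullivan model factors a CDGA morphism as a free extension followed by a quasi-isomorphism; for the inclusion $\iota_j : A_j\to X_j$ the induced morphism is $\iota_j^* : A_{PL}(X_j)\to A_{PL}(A_j)$, so the relative Sullivan model has the shape $\mathcal{M}_{X_j}\to \mathcal{M}_{X_j}\otimes\wedge Z\stackrel{\simeq}{\to}A_{PL}(A_j)$: the free extension is a model for $A_j$ built over a model for $X_j$, not the other way around. There is no CDGA map from a model of $A_j$ to a model of $X_j$ to factor, so a free extension $\wedge V_j\otimes\wedge B_j$ of the minimal model of $A_j$ that models $X_j$ does not exist for a general inclusion; that construction is available precisely when $\iota_j$ is the fibre inclusion of a fibration over a simply-connected base, which is the special case handled separately in Proposition \ref{prop:Inc}. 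Nor does the surjective trick rescue the step: it produces a surjection $\mathcal{M}_{X_j}\otimes\wedge(U\oplus dU)\twoheadrightarrow \wedge V_j$ whose kernel is not visibly the ideal generated by the augmentation ideal of a free tensor factor. The missing idea --- and the paper's actual proof --- is a change of generators in an \emph{arbitrary} surjective Sullivan model $\varphi_j : \wedge W_j\to\wedge V_j$ (whose existence is the footnote you cite): choose indecomposable elements $w_\lambda$ with $\varphi_j(w_\lambda)=v_\lambda$ for a basis $\{v_\lambda\}$ of $V_j$, and replace each remaining generator $w_\gamma''$ by $w_\gamma''-P(w_\lambda)$, where $P(v_\lambda)$ represents $\varphi_j(w_\gamma'')$ in $\wedge V_j$. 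With respect to the new generators, $\varphi_j$ is literally the projection $\wedge V_j\otimes\wedge W_j''\to\wedge V_j$, the algebra $B_j:=\wedge W_j''$ is free, and $\text{Ker}\,\varphi_j$ is the ideal generated by $B_j^+$; your second step then applies verbatim.
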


\begin{proof}For each $j$, 
let $\varphi_j : \wedge W_j  \to \wedge V_j$ be a surjective model for the inclusion $\iota_j :  A_j \to X_j$. 
Since $\varphi_j$ is surjective, the vector space $W_j$ admits a decomposition 
$W_j\cong W_j'\oplus W_j''$ which satisfies the condition that $\varphi_j|_{W_j'} : W_j' \stackrel{\cong}{\to}  V_j$ is an isomorphism and $\varphi_j|_{W_j''} \equiv 0$. In fact, we choose  indecomposable elements $w_\lambda$ of $\wedge W_j$ so that $\varphi_j(w_\lambda)=v_\lambda$ for a basis 
$\{v_\lambda \}_{\lambda \in \Lambda}$ 
for $V_j$. Then, we have a decomposition 
\[
W_j\cong \Q\{ w_\lambda \mid \lambda \in \Lambda \}\oplus \Q\{w_\gamma'' \mid \gamma \in \Gamma\}
\]
with some index set $\Gamma$. 
Let $P(v_\lambda)$ be the polynomial on $v_\lambda$'s which represents $\varphi_j(w_\gamma'')$ in $\wedge V_j$. Putting 
$W_j'' := \Q\{ w_\gamma'' - P(w_\lambda) \}$, we have the decomposition required above.  
Theorem \ref{thm:F-T} yields the result. 
\end{proof}

We provide a more tractable SR type model for a polyhedral product of a fibre inclusion.  
For $1\leq j \leq m$, let $F_j \stackrel{\iota_j}{\longrightarrow} X_j \stackrel{p_j}{\longrightarrow} Y_j$ be a fibration with simply-connected base. Assume that $H^*(Y_j; \Q)$ is locally finite for each $j$. Then, a relative Sullivan model $\widetilde{p_j}$ for the map $p_j$ gives a commutative diagram of CDGA's
\[
\xymatrix@C20pt@R15pt{
\wedge W_j \ar[r]^-{\widetilde{p_j}} \ar[d]_-{\simeq}& ( \wedge V_j \otimes \wedge W_j, d_j) \ar[r]^-{\widetilde{\iota_j}} \ar[d]_-{\simeq}& (\wedge V_j, \overline{d_j}) \ar[d]_-{\simeq}\\
A_{PL}(Y_j) \ar[r]^-{p_j^*} &  A_{PL}(X_j) \ar[r]^-{\iota_j^*} & A_{PL}(F_j)  
}
\]
in which vertical maps are quasi-isomorphisms; see \cite[20.3 Theorem]{H}. 
The upper sequence is called a model for the fibration. 
It follows from the construction that $\widetilde{\iota_j}$ is a surjective model for 
$\iota_j$. If $\widetilde{p_j}$ is minimal, by definition, we see that
 $d(V_j) \subset (\wedge^{\geq 2}V_j)\otimes \wedge W_j  + \wedge V_j \otimes \wedge^+ W_j $  in the SR type CDGA.  
By virtue of Theorem \ref{thm:F-T}, we have 

\begin{prop}\label{prop:Inc} With the same notation as above, the polyhedral product $(\underline{X}, \underline{F})^K$ for the tuple of fibre inclusions 
$\iota_j$ has a SR type CDGA model of the form  
\[
\mathcal{M}((\underline{X}, \underline{F})^K):=\Big(\bigotimes_{j=1}^m(\wedge V_j\otimes \wedge W_j ))\big / \big.(b_{j_1}\cdots b_{j_s}\mid b_j \in W_j, \{j_1, ..., j_s\}\notin K), d\Big)
\]
for which $d(W_j) \subset \wedge W_j$ and $d(V_j) \subset (\wedge^{\geq 2}V_j)\otimes \wedge W_j  + \wedge V_j \otimes \wedge^+ W_j $. 
\end{prop}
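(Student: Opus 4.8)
The plan is to read the surjective models off the given fibration models and then invoke Theorem \ref{thm:F-T} directly, so that the whole content of the proposition reduces to identifying the ideal $J(K)$ with the displayed Stanley--Reisner monomial ideal. First I would set $\mathcal{M}_j := (\wedge V_j \otimes \wedge W_j, d_j)$ and $\mathcal{M}_j' := (\wedge V_j, \overline{d_j})$ and take $\varphi_j := \widetilde{\iota_j}$ as the surjective model for $\iota_j$; the commutative diagram of the fibration model exhibits these data in exactly the shape required in (\ref{eq:SurjectiveModels}). Since the base $Y_j$ is simply connected, $W_j$ is concentrated in degrees $\geq 2$, and as $\widetilde{\iota_j}$ is the identity on $V_j$ and sends $W_j$ to zero, I would compute $\mathrm{Ker}\,\widetilde{\iota_j} = \wedge V_j \otimes \wedge^+ W_j$, namely the ideal of $\mathcal{M}_j$ generated by $W_j$.

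Next I would unwind $J(K)$. For a non-face $\tau = \{j_1,\ldots,j_s\}\notin K$, the subspace $I_\tau$ consists of those elements whose $\wedge W_i$-component lies in $\wedge^+ W_i$ for every $i\in\tau$; hence, as an ideal of $\bigotimes_j \mathcal{M}_j$, $I_\tau$ is generated by the products $b_{j_1}\cdots b_{j_s}$ with $b_{j_i}\in W_{j_i}$. Because $I_\tau \subseteq I_{\tau'}$ whenever $\tau \supseteq \tau'$, the sum $J(K)=\sum_{\tau\notin K} I_\tau$ collapses to the minimal non-faces and thus equals the ideal $(b_{j_1}\cdots b_{j_s}\mid b_j\in W_j,\ \{j_1,\ldots,j_s\}\notin K)$. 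This identifies the F\'elix--Tanr\'e quotient $(\bigotimes_j \mathcal{M}_j)/J(K)$ with the claimed SR $K$-type algebra $\mathcal{M}((\underline{X}, \underline{F})^K)$.

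Finally I would record the two differential constraints. The inclusion $d(W_j)\subset \wedge W_j$ holds because $\wedge W_j$ is a sub-CDGA of $\mathcal{M}_j$, being the Sullivan model of the base sitting inside the relative model; this in particular shows that each $\mathrm{Ker}\,\widetilde{\iota_j}$, and hence $J(K)$, is a differential ideal, so the quotient is a genuine CDGA. The constraint $d(V_j)\subset (\wedge^{\geq 2}V_j)\otimes \wedge W_j + \wedge V_j \otimes \wedge^+ W_j$ is precisely the minimality of the relative Sullivan model $\widetilde{p_j}$: it asserts that the linear part of $d$ carrying $V_j$ back into $V_j\otimes 1$ vanishes, equivalently that the induced fibre differential $\overline{d_j}$ on $\wedge V_j$ is minimal. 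Applying Theorem \ref{thm:F-T} then gives the conclusion, and Proposition \ref{prop:naturality} secures compatibility with the chosen surjective models. The only step that needs genuine care is the bookkeeping in the second paragraph: one must check that the product-of-kernels description of $I_\tau$ really is generated by products of \emph{single} generators $b_j\in W_j$, rather than by arbitrary kernel elements, and that the passage to minimal non-faces is valid; everything else is a direct consequence of the already-established Theorem \ref{thm:F-T}.
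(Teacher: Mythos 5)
Your proposal is correct and follows essentially the same route as the paper: take the relative Sullivan model of each fibration, observe that the projection $\widetilde{\iota_j}$ is a surjective model for $\iota_j$ with kernel the ideal generated by $W_j$, identify $J(K)$ with the displayed monomial ideal, read the constraint on $d(V_j)$ off the minimality of $\widetilde{p_j}$, and invoke Theorem \ref{thm:F-T}. The paper leaves the kernel and ideal bookkeeping implicit, but your verification of it is accurate and adds nothing that conflicts with the argument.
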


\begin{rem}
The model in Proposition \ref{prop:Inc} is not a Sullivan model in general. However, 
if we construct a Sullivan model by using the model, then for example, we may obtain information on the rational homotopy group of 
$(\underline{X}, \underline{F})^K$; see Example \ref{ex:SU(n)} below. 
\end{rem}

\begin{ex}\label{ex:LX}
(i) Let $S^1 \to ES^1 \to BS^1$ be the universal $S^1$-bundle and $K$ be a simplicial complex with $m$ vertices. 
Then, we have a model for the bundle of the form $\wedge (dx) \to \wedge(dx)\otimes \wedge(x) \stackrel{\widetilde{\iota}}{\to} \wedge(x)$,  where 
$\widetilde{\iota}$  is the canonical projection and $\deg x = 1$. It follows from Proposition \ref{prop:Inc} that 
$
\mathcal{M}((ES^1, S^1)^K) \cong (\wedge (x_1, ..., x_m)\otimes SR(K), d)
$
where $d(x_j) = dx_j$; see Remark \ref{rem:RS}. Observe that $\mathcal{Z}_K(D^2, S^1)\simeq \mathcal{Z}_K(ES^1, S^1) \simeq_w (ES^1, S^1)^K$; see \cite[page 33]{D-S} for the first homotopy equivalence. 


(ii) 
Let $X$ be a simply-connected space and $LX$ the free loop space, namely the space of maps from $S^1$ to $X$ endowed with compact-open topology. The rotation action of $S^1$ on the domain of maps in $LX$ induces an $S^1$-action on the free loop space. 
Thus we have the Borel fibration
$LX \stackrel{i}{\to} ES^1 \times_{S^1}LX \stackrel{p}{\to} BS^1$.  We write $(LX)_h$ for the Borel construction $ ES^1 \times_{S^1}LX$. Let $(\wedge V, d)$ be the minimal model for $X$. 
Then, the result \cite[Theorem A]{V-B1} asserts that the sequence 
\[
\wedge (t) \stackrel{\widetilde{p}}{\to} (\wedge (t) \otimes \wedge (V\oplus \overline{V}), \delta) \stackrel{\widetilde{i}}{\to} (\wedge(V\oplus \overline{V}), \delta')
\]
is a model for the Borel fibration, where $\delta'(v) = d(v)$, $\delta'(\overline{v}) = -sd(v)$ and  $\delta u = \delta' (u) + ts(u)$ for $u \in V\oplus \overline{V}$. 
The map $\widetilde{i}$ is the projection and hence a surjective model for $i$. Thus Proposition \ref{prop:Inc} enables us to obtain a F\'elix--Tanr\'e model for the 
polyhedral product $((LX)_h, LX)^K$ of the form  
$
(\otimes_{i=1}^m(\wedge (V_i\oplus \overline{V}_i) \otimes SR(K), \otimes_i \delta_i).
$

(iii) We can apply Theorem \ref{thm:F-T} to an explicit surjective model for an inclusion. 
Let $X$  be a space as in (ii) and $(\wedge V, d)$ a minimal model for $X$. Then, the projection $(\wedge (V\oplus \overline{V}, \delta') \to 
(\wedge V, d)$ is a surjective model for 
the inclusion $X \to LX$ defined by assigning the constant loop at $x$ to a point $x$.  
In fact, the inclusion is a section of the evaluation map 
$ev_0 : LX \to X$ at zero. The inclusion $(\wedge V, d) \to (\wedge (V\oplus \overline{V}, \delta')$ gives rise to a model for $ev_0$. 
By considering the rational homotopy, we have the result.
Thus, Theorem \ref{thm:F-T} allows us to construct a model for the 
polyhedral product $(LX, X)^K$ 
of the form 
\[
(\bigotimes_{i=1}^m \wedge(V_i\oplus \overline{V_i}))\big/ \big. (\overline{v}_{i_1}\cdots \overline{v}_{i_s} \mid \overline{v}_j\in \overline{V}_j,  
\{i_1, ..., i_s\} \notin K)
\]
for which $d(V_i)\subset \wedge V_i$. 
\end{ex}

Proposition \ref{prop:Inc} enables us to deduce the following result. 

\begin{cor}\label{cor:SS} Let $K$ be a simplicial complex with $m$ vertices. For $1\leq j \leq m$, let 
$F_j \to X_j \to Y_j$ be a fibration with simply-connected base $Y_j$. 
Then there is a first quadrant spectral sequence converging to $H^*((\underline{X}, \underline{F})^K; \Q)$ as an algebra with
\[
E_2^{*, *} \cong \big(\bigotimes_{j=1}^mH^*(F_j; \Q)\big)\otimes H^*((\underline{Y}, \underline{*})^K; \Q)
\] 
as a bigraded algebra, where $E_2^{p, q} \cong (\big(\bigotimes_{j=1}^mH^*(F_j; \Q)\big)\otimes H^p((\underline{Y}, \underline{*})^K; \Q))^{p+q}$.
\end{cor}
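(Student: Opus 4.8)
The plan is to extract the spectral sequence directly from the Stanley--Reisner type model furnished by Proposition~\ref{prop:Inc}. Write $\mathcal{M} := \mathcal{M}((\underline{X}, \underline{F})^K)$ for that model, with underlying algebra $(\bigotimes_{j=1}^m(\wedge V_j\otimes \wedge W_j))/(b_{j_1}\cdots b_{j_s}\mid b_j\in W_j,\ \{j_1,\dots,j_s\}\notin K)$ and differential $d$ satisfying $d(W_j)\subset \wedge W_j$ and $d(V_j)\subset (\wedge^{\geq 2}V_j)\otimes \wedge W_j + \wedge V_j\otimes \wedge^+ W_j$. First I would introduce the filtration on $\mathcal{M}$ by the word-length (polynomial degree) in the $W_j$-variables, i.e.\ $F^p\mathcal{M}$ spanned by elements whose total degree in $\bigcup_j W_j$ is at least $p$. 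The two conditions on $d$ guarantee that this filtration is preserved by the differential: $d$ never lowers the $W$-degree, since $d(W_j)\subset \wedge W_j$ keeps it and $d(V_j)$ either keeps it (the $\wedge^{\geq 2}V_j\otimes \wedge W_j$ summand) or raises it (the $\wedge V_j\otimes \wedge^+W_j$ summand). This is a decreasing filtration bounded below by $F^0=\mathcal{M}$, and first-quadrant because each factor is non-negatively graded; thus the associated spectral sequence converges to $H^*(\mathcal{M})\cong H^*((\underline{X},\underline{F})^K;\Q)$ as an algebra, the last isomorphism being Proposition~\ref{prop:Inc}.

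Next I would compute the $E_0$ and $E_1$ (hence $E_2$) pages. On the associated graded, the differential $d$ splits as $d_0 + d_1 + \cdots$ where $d_0$ preserves $p$ and $d_1$ raises it by one. On $E_0$ the only part of $d$ surviving is the $W$-internal piece $d(W_j)\subset \wedge W_j$ together with the $\wedge^{\geq 2}V_j\otimes \wedge W_j$ contribution of $d(V_j)$; but after passing to the associated graded with respect to $W$-degree, the differential that acts \emph{within} a fixed $W$-degree is precisely the fibre differential $\overline{d_j}$ on each $\wedge V_j$ (the reduction modulo $\wedge^+ W_j$), with the $W$-variables treated as cocycles. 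So $E_1$ is the cohomology of $\bigotimes_j(\wedge V_j,\overline{d_j})$ tensored with the surviving $W$-algebra modulo the Stanley--Reisner relations. Since $(\wedge V_j,\overline{d_j})$ is a model for the fibre $F_j$, its cohomology is $H^*(F_j;\Q)$; and the $W$-part modulo the Stanley--Reisner relations, carrying its internal differential $d|_{\wedge W_j}$, is exactly the Stanley--Reisner type model $\mathcal{M}((\underline{Y},\underline{*})^K)$ for the base polyhedral product, whose cohomology is $H^*((\underline{Y},\underline{*})^K;\Q)$. Taking this further cohomology yields
\[
E_2^{*,*}\cong \Big(\bigotimes_{j=1}^m H^*(F_j;\Q)\Big)\otimes H^*((\underline{Y},\underline{*})^K;\Q)
\]
as a bigraded algebra, with the bidegree bookkeeping $E_2^{p,q}\cong \big((\bigotimes_j H^*(F_j;\Q))\otimes H^p((\underline{Y},\underline{*})^K;\Q)\big)^{p+q}$ exactly as claimed, the filtration degree $p$ being the $W$-degree.

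I expect the main obstacle to be justifying that the $E_1$ (or $E_2$) page factors cleanly as the stated tensor product, rather than merely admitting a filtration with those subquotients. Two points require care. First, one must verify that passing to associated graded genuinely separates the fibre differentials $\overline{d_j}$ from the base differential on the $W$-algebra, i.e.\ that the cross terms $\wedge V_j\otimes \wedge^+ W_j$ in $d(V_j)$ land in strictly higher filtration and so contribute only to later differentials $d_r$ ($r\geq 1$); this is where the precise form of the differential in Proposition~\ref{prop:Inc} is essential. Second, one must check that the Stanley--Reisner relations, which mix the different tensor factors, interact correctly with the K\"unneth-type splitting — specifically that the ideal is generated purely by $W$-monomials, so that after quotienting, the $V$-factors remain freely tensored over the ground field and the cohomology of the tensor product computes as the tensor product of cohomologies (using that we are over a field $\Q$, so the K\"unneth theorem applies without Tor corrections). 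Granting these, the algebra structure on $E_2$ is inherited from the multiplicative filtration, and the identification of the base factor with $H^*((\underline{Y},\underline{*})^K;\Q)$ follows from Example~\ref{ex:DJ} and Remark~\ref{rem:RS} applied factorwise.
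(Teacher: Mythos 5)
Your overall strategy is the same as the paper's: the paper's entire proof consists of filtering the Stanley--Reisner type model $\mathcal{M}((\underline{X},\underline{F})^K)$ of Proposition \ref{prop:Inc} by the $\bigotimes_j\wedge W_j$ factor and invoking \cite[18(b) Example 2]{F-H-T}. The gap is in which grading you filter by. You declare the filtration to be by word-length (``polynomial degree'') in the $W_j$-variables, but the statement forces the filtration degree $p$ to be the \emph{cohomological} degree of the factor $(\bigotimes_j\wedge W_j)/I(K)$ --- note that $H^p((\underline{Y},\underline{*})^K;\Q)$ is placed in filtration degree $p$ --- and for the word-length filtration your identifications of $E_0$, $E_1$ and $E_2$ fail. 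First, $\wedge W_j$ is only assumed to be a Sullivan model for $Y_j$, not a minimal one, so $d(W_j)$ may have a linear, word-length-preserving part; then the $W$-variables are not cocycles on $E_0$, contrary to what you assert. Second, even for minimal base models the word-length $d_1$ sees only the quadratic part of $d|_{\wedge W_j}$, and the cross-terms of $d(V_j)$ lying in $\wedge V_j\otimes W_j$ raise word-length by exactly one and therefore also feed into $d_1$ --- you concede this yourself when you allow them to contribute to $d_r$ for $r\geq 1$. Hence $E_2$ would not be $(\bigotimes_j H^*(F_j;\Q))\otimes H^*((\underline{Y},\underline{*})^K;\Q)$, nor would it carry the stated bigrading.

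The repair is to take $F^p:=(\bigotimes_j\wedge V_j)\otimes\big((\bigotimes_j\wedge W_j)/I(K)\big)^{\geq p}$, graded by cohomological degree in the $W$-factor. Then $d(W_j)\subset\wedge W_j$ raises $p$ by exactly one, so the $W$-variables genuinely die on $E_0$ and $d_0$ is the fibre differential $\overline{d_j}$; the differential $d_1$ is the full differential of $(\bigotimes_j\wedge W_j)/I(K)$, which is the F\'elix--Tanr\'e model of $(\underline{Y},\underline{*})^K$ by Theorem \ref{thm:F-T}; and, since each $Y_j$ is simply connected, one may take $W_j=W_j^{\geq 2}$, so the cross-terms $\wedge V_j\otimes\wedge^+W_j$ raise $p$ by at least two and first appear in $d_2$. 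This last point is where the simple connectivity of the bases is actually used; it does not follow from the shape of the differential alone. With this change of filtration, your computation of $E_1$ and $E_2$, and your remarks on the K\"unneth theorem and on the Stanley--Reisner ideal being generated by $W$-monomials, go through as written.
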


\begin{proof}With the same notation as in Proposition \ref{prop:Inc}, we give the CDGA 
$\mathcal{M}((\underline{X}, \underline{F})^K)$ a filtration associated with the degrees of elements in $\otimes_j\wedge W_j$. 
The filtration gives rise to the spectral sequence; see \cite[18(b) Example 2]{F-H-T}. 
\end{proof}


Let $HH_*(A_{PL}(X))$ denote the Hochschild homology of $A_{PL}(X)$. There exists an isomorphism $HH_*(A_{PL}(X))\cong H^*(LX; \Q)$ of algebras; see \cite{V-B1} and \cite[15(c) Example 1]{F-H-T}. Therefore, 
Example \ref{ex:LX} (ii) allows us to obtain the following result. 

\begin{cor}\label{cor:LX} Let $X$ be a simply-connected space. Then, 
there exists a first quadrant spectral sequence converging to the cohomology $H^*(((LX)_h, LX)^K; \Q)$ as an algebra 
with
\[
E_2^{*,*} \cong HH_*(A_{PL}(X))^{\otimes m} \otimes SR(K)
\]
as a bigraded algebra, where $\text{\em bideg}\ x = (0,\deg x)$ for $x \in HH_*(A_{PL}(X))$ and $\text{\em bideg}\ t_i = (2, 0)$ for the generator $t_i \in SR(K)$. 
\end{cor}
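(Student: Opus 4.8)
The plan is to obtain the statement as a direct specialization of Corollary \ref{cor:SS} to the Borel fibration $LX \stackrel{i}{\to} (LX)_h \stackrel{p}{\to} BS^1$ of Example \ref{ex:LX} (ii), combined with the two cohomological identifications already in hand. First I would check that this fibration satisfies the hypotheses of Corollary \ref{cor:SS}: the base $BS^1$ is simply connected and $H^*(BS^1; \Q) = \Q[t]$ is locally finite, so putting $F_j = LX$, $X_j = (LX)_h$ and $Y_j = BS^1$ for every $j$ is legitimate. The relative Sullivan model required to form the SR type model $\mathcal{M}(((LX)_h, LX)^K)$ of Proposition \ref{prop:Inc} is precisely the one recorded in Example \ref{ex:LX} (ii). Thus Corollary \ref{cor:SS} applies and produces a first quadrant spectral sequence of algebras converging to $H^*(((LX)_h, LX)^K; \Q)$ with
\[
E_2^{*,*} \cong \Big(\bigotimes_{j=1}^m H^*(LX; \Q)\Big)\otimes H^*((BS^1, *)^K; \Q).
\]

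It then remains to rewrite the two tensor factors. Since $(BS^1, *)^K = DJ(K)$, Example \ref{ex:DJ} identifies $H^*((BS^1, *)^K; \Q)$ with the Stanley--Reisner algebra $SR(K)$, while the algebra isomorphism $HH_*(A_{PL}(X)) \cong H^*(LX; \Q)$ recalled above replaces each factor $H^*(LX; \Q)$ by $HH_*(A_{PL}(X))$. These substitutions yield $E_2 \cong HH_*(A_{PL}(X))^{\otimes m}\otimes SR(K)$ as a bigraded algebra. I would also note why the untwisted tensor product already appears on the $E_2$ page: in the model of Example \ref{ex:LX} (ii) the only part of the differential coupling the loop generators to the $SR(K)$ factor is the term $t_i s(u)$, which raises the filtration degree---the cohomological degree carried by the $SR(K)$ factor---by $\deg t_i = 2$; since the filtration jumps by an even amount the page-one differential vanishes and $E_2$ is the cohomology of the fibrewise differential $\delta'$ tensored with $SR(K)$.

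Finally I would read off the bidegrees from the description $E_2^{p,q} \cong (\bigotimes_j H^*(F_j; \Q)\otimes H^p((\underline{Y}, \underline{*})^K; \Q))^{p+q}$ in Corollary \ref{cor:SS}, in which $p$ records the cohomological degree in the base polyhedral product and $q$ the contribution of the fibre factors. A generator $t_i$ lies in $H^2(DJ(K))$, so $p = 2$, and having total degree $2$ it satisfies $q = 0$, giving $\bideg{t_i} = (2,0)$; a class $x \in HH_*(A_{PL}(X)) = H^*(LX; \Q)$ occupies a fibre factor with $p = 0$ and $q = \deg x$, giving $\bideg{x} = (0, \deg x)$. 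The one point requiring real care is multiplicativity: I must use that the Hochschild isomorphism is one of algebras and that $SR(K)$ enters with its own cohomological grading as the filtration degree, so that the identification on $E_2$ is genuinely an isomorphism of bigraded algebras rather than of bigraded vector spaces only. Past this bookkeeping, the corollary is a formal consequence of Corollary \ref{cor:SS} and the free-loop-space model.
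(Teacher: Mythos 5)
Your proposal is correct and follows essentially the same route as the paper: the corollary is obtained by specializing the spectral sequence of Corollary \ref{cor:SS} (built from the SR-type model of Proposition \ref{prop:Inc}) to the Borel fibration $LX \to (LX)_h \to BS^1$ with the model of Example \ref{ex:LX}~(ii), and then rewriting the $E_2$-term via $HH_*(A_{PL}(X)) \cong H^*(LX;\Q)$ and $H^*(DJ(K);\Q)\cong SR(K)$. The extra remarks on the filtration shift and on multiplicativity are consistent supplementary bookkeeping rather than a departure from the paper's argument.
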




\begin{rem} Let $F_j \stackrel{i_j}{\to} X_j \stackrel{p_j}{\to} B_j$ be a fibration for each $1 \leq j \leq m$. 
Suppose further that each $(X_j, F_j)$ is a pair of CW-complexes. Then, 
the F\'elix and Tanr\'e rational model for $(\underline{X}, \underline{F})^K $ in Proposition \ref{prop:Inc}
associated with the fibre inclusions is nothing but the relative Sullivan model for the pullback 
\[
\xymatrix@C25pt@R12pt{
(\underline{F}, \underline{F})^K \ar@{=}[r] \ar[d] & \Pi^m  \underline{F} \ar[d] \\
 (\underline{X}, \underline{F})^K \ar[r]  \ar[d]&  \Pi^m  \underline{X} \ar[d] \\
 (\underline{B}, \underline{*})^K \ar[r]  &  \Pi^m  \underline{B}
}
\]
which is introduced in \cite[Lemma 2.3.1]{D-S}. In fact, this follows from \cite[20.6]{H}.
\end{rem}

\section{A rational model for the polyhedral product of a pair of Lie groups}\label{sect:Liegroups}

In this section, we consider a more explicit model for the polyhedral product $(G, H)^K$ for a pair of a Lie group 
and a closed subgroup corresponding to arbitrary simplicial complex $K$. 
In particular, we have a manageable SR type model for $(G, H)^K$. Indeed, the rational model is determined by the image of the characteristic classes of 
$BG$ by the map $(Bi)^*: H^*(BG; \Q) \to  H^*(BH; \Q)$ for the inclusion $i : H \to G$; see Proposition \ref{prop:PolyProd-model} for more details of the model. 
By using the model, we prove Theorem \ref{thm:Rational_homotopy}. 

Let $G$ be a connected Lie group and $H$ a closed connected subgroup of $G$.  
Let $H\stackrel{i}{\to} G \stackrel{\pi}{\to} G/H$ be the principal $H$-bundle.  
In order to obtain a rational model for  $(G, H)^K$, we first construct an appropriate surjective model for the fibre inclusion $i$. 

Consider the fibration  $EH \to EH\times_H G \stackrel{q}{\to} G/H$ associated with the bundle $\pi$. Since $EH$ is contractible, it follows that the map $q$ is 
a weak homotopy equivalence. Moreover, we have a homotopy pullback diagram  
\begin{eqnarray}\label{eq:Apullback}
\xymatrix@C20pt@R12pt{
G \ar[d]_\iota \ar[r]^-{=}& G \ar[d]\\ 
EH\times_H G \ar[d] \ar[r]^-h & EG \ar[d]^{p_G} \\
BH \ar[r]_-{Bi} & BG,   
}
\end{eqnarray}
where vertical sequences are the associated fibration and the universal $G$-bundle, respectively, and $h$ is a map defined by $h([x, g])= Ei(x)g$. 
There exists a model for the universal bundle of the form 
\[
\xymatrix@C20pt@R18pt{
(\wedge V_{BG}, 0) \ar[r] \ar[d]_-{\simeq}^{m_{BG}}& ( \wedge V_{BG} \otimes \wedge P_G, d) \ar[r] \ar[d]_-{\simeq}^{m_{E_G}}& (\wedge P_G, 0) \ar[d]_-{\simeq}\\
A_{PL}(BG) \ar[r]^-{p_G^*} &  A_{PL}(E_G) \ar[r] & A_{PL}(G),  
}
\]
such that  $d(x_i) = y_i$ and $m_{E_G}(x_i) = \Psi_i$ for $x_i \in \wedge V_G$, where $\Psi_i \in A_{PL}(E_G)$ with $d\Psi_i = p_G^* m_{BG} (y_i)$. Then, 
by applying the pushout construction \cite[Proposition 15.8]{F-H-T} to the model of the bundle $p_G$, we have a model 
\[
\xymatrix@C20pt@R18pt{
(\wedge V_{BH}, 0) \ar[r] \ar[d]_-{\simeq}& ( \wedge V_{BH} \otimes \wedge P_G, d) \ar[r]^-{\overline{\iota}} \ar[d]_-{\simeq}^m& (\wedge P_G, 0) \ar[d]_-{\simeq}^-{m_G}\\
A_{PL}(BH) \ar[r] &  A_{PL}(EH\times_H G) \ar[r]^-{A_{PL}(\iota)} & A_{PL}(G)  
}
\]
of the fibration of the left hand side in the diagram (\ref{eq:Apullback})  in which $d(x_i) = (Bi)^*y_i$. Furthermore, the maps $\pi$, $\iota$ and $q$ mentioned above fit in the commutative diagram
\[
\xymatrix@C20pt@R18pt{
G \ar[rd]_-\iota \ar[r]^-\simeq & EH\times G \ar[d] \ar[r]^-\simeq & G \ar[d]^{\pi} \\
 & EH\times_H G \ar[r]_-\simeq^-q & G/H,    
}
\]
where horizontal arrows are (weak) homotopy equivalences. Thus, the Lifting lemma \cite[Proposition 14.6]{F-H-T} implies that a Sullivan model (\cite[15(a)]{F-H-T}) for $\iota$ is regarded as that for $\pi$. Consider a commutative diagram 
\[
\xymatrix@C20pt@R18pt{
(\wedge V_{BH}\otimes \wedge P_G, d) \ar[d]^-m_-\simeq \ar[r]^-j &(\wedge V_{BH}\otimes \wedge P_G \otimes \wedge P_H, \partial) \ar[r]^-\gamma_-{\simeq}& (\wedge P_G, 0) \ar[dl]^-{m_G}_-\simeq\\
A_{PL}(EH\times_H G) \ar[r]^-{A_{PL}(\iota)} &  A_{PL}(G)   
}
\]
of CDGA's in which $j$ is an extension, $\gamma$ is the projection and the differential $\partial$ is defined by $\partial(u_i)= t_i$ for $u_i \in P_H$, $t_i \in V_{BH}$ and 
$\partial(x_i) = (Bi)^*(y_i)$ for $x_i \in P_G$. Observe that $H^*(\wedge P_G, 0) \cong H^*(G ; \Q)$ and $\gamma$ is a quasi-isomorphism. Then, we see that the projection 
$\gamma : (\wedge V_{BH}\otimes \wedge P_G \otimes \wedge P_H, \partial)  \to (\wedge P_H, 0)$ 
is  a surjective model for  the inclusion $H \to G$. Thus, Proposition \ref{prop:Inc} yields the following result.

\begin{prop}\label{prop:PolyProd-model}
One has a rational model of the form 
\begin{eqnarray}\label{eq:PolyProd-model}
((\wedge P_H)^{\otimes m}\otimes \big((\wedge V_{BH}\otimes \wedge P_G)^{\otimes m}\big/ \big. I(K)\big), \partial)
\end{eqnarray}
for the polyhedral product $(G, H)^K$, where $I(K)$ denotes the Stanley--Reisner ideal generated by elements in 
$(\wedge V_{BK}\otimes \wedge P_G)^{\otimes m}$. 
\end{prop}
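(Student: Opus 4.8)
The plan is to produce the claimed surjective model for the fibre inclusion $\iota : G \to EH\times_H G$ and then invoke Proposition~\ref{prop:Inc} verbatim. The key observation is that the entire construction reduces to finding one surjective Sullivan model for the single fibre inclusion $i : H \to G$ (equivalently $\iota$), since Proposition~\ref{prop:Inc} then assembles the polyhedral product model automatically, tensoring over the $m$ vertices and quotienting by the Stanley--Reisner ideal. So the work is entirely in the ``one-factor'' situation that the preceding text has already carried out, and the proposition is essentially a bookkeeping statement recording the shape of the output.

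First I would recall the model for the universal $G$-bundle $G \to EG \to BG$, writing $H^*(BG;\Q) = \wedge V_{BG}$ (polynomial on even generators $y_i$) and $H^*(G;\Q) = \wedge P_G$ (exterior on odd generators $x_i$), with the transgression $d(x_i) = y_i$ in the bundle model $(\wedge V_{BG}\otimes \wedge P_G, d)$. Applying the pushout/base-change construction of \cite[Proposition 15.8]{F-H-T} along $Bi : BH \to BG$ converts $V_{BG}$ into $V_{BH}$ and replaces the differential by $d(x_i) = (Bi)^*(y_i)$, yielding a model $(\wedge V_{BH}\otimes \wedge P_G, d)$ for the fibration $G \to EH\times_H G \to BH$ of the homotopy pullback \eqref{eq:Apullback}. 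This is exactly the top-left CDGA in the last displayed diagram of the excerpt.

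Next I would build the surjective model for $\iota$ itself by the surjective trick: adjoin exterior generators $P_H$ (a copy of the primitives of $H$) with $\partial(u_i) = t_i$ for $t_i \in V_{BH}$, extending the differential so that $\partial(x_i) = (Bi)^*(y_i)$ on $P_G$, and defining $\gamma$ to be the projection killing $V_{BH}$ and $P_H$. One checks that $\gamma : (\wedge V_{BH}\otimes \wedge P_G \otimes \wedge P_H, \partial) \to (\wedge P_H, 0)$ is a quasi-isomorphism onto $H^*(H;\Q) = \wedge P_H$ (the acyclic $(\wedge V_{BH}\otimes \wedge P_H, \partial)$ factor contributes nothing to cohomology), and that it is a surjective model for $i : H \to G$; this is precisely the content of the commutative triangle displayed just before the statement, together with the Lifting lemma \cite[Proposition 14.6]{F-H-T} identifying the model for $\iota$ with one for $\pi$. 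With this surjective model in hand, Proposition~\ref{prop:Inc} applies directly: the roles of $W_j$ and $V_j$ there are played by the odd generators $P_H$ (the fibre $H$) and the base-fibration generators $\wedge V_{BH}\otimes \wedge P_G$ respectively, so the resulting SR type model is exactly \eqref{eq:PolyProd-model}, with the Stanley--Reisner ideal $I(K)$ generated by the products of elements of $\wedge V_{BH}\otimes \wedge P_G$ indexed by non-faces of $K$.

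The only genuine point to verify is that $\gamma$ really is a \emph{surjective} model of the inclusion $i$, i.e.\ that the new generators and differential $\partial$ are consistent with $A_{PL}(\iota)$ up to the required quasi-isomorphisms; but this is guaranteed by the Lifting lemma applied to the strictly commutative triangle, so no obstruction arises. I do not expect a serious difficulty anywhere: the main (modest) care is in tracking which generators are exterior and which polynomial so that the ideal $I(K)$ is generated by the correct factors, and in confirming that the differential $\partial$ of Proposition~\ref{prop:Inc} — namely $d(W_j)\subset \wedge W_j$ together with $d(V_j)\subset (\wedge^{\geq 2}V_j)\otimes \wedge W_j + \wedge V_j\otimes \wedge^+ W_j$ — is compatible with the transgressive differential $\partial(u_i)=t_i$, $\partial(x_i)=(Bi)^*(y_i)$ recorded above. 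Hence the proof is a direct application of the fibre-inclusion model constructed in this section to Proposition~\ref{prop:Inc}.
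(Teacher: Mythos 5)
Your proposal follows the paper's own route exactly: build the relative Sullivan model for $H \to G \to G/H$ from the universal $G$-bundle by base change along $Bi$, adjoin $\wedge P_H$ with $\partial(u_i)=t_i$, read off a surjective model for the fibre inclusion, and feed it into Proposition~\ref{prop:Inc}. Two local slips are worth correcting, though neither changes the strategy or the conclusion. First, the projection $(\wedge V_{BH}\otimes\wedge P_G\otimes\wedge P_H,\partial)\to(\wedge P_H,0)$ is \emph{not} a quasi-isomorphism: the source is a Sullivan model for $G$, so its cohomology is $H^*(G;\Q)\cong\wedge P_G$, and on cohomology this projection induces $i^*\colon H^*(G;\Q)\to H^*(H;\Q)$, which is an isomorphism only in degenerate cases (for $G=SU(2)$, $H=T^1$ it vanishes in positive degrees). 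The quasi-isomorphism is the \emph{other} projection, onto $(\wedge P_G,0)$, obtained by killing the contractible factor $\wedge(V_{BH}\oplus P_H)$; its role is only to certify that the extended algebra still models $G$. The projection onto $\wedge P_H$ is then the required \emph{surjective model} for $i\colon H\to G$, which is all that Proposition~\ref{prop:Inc} needs --- you assert both properties of the same map, and only the second is true. Second, relative to the notation of Proposition~\ref{prop:Inc} you have interchanged $W_j$ and $V_j$: the base model $\wedge(V_{BH}\oplus P_G)$ for $Y_j=G/H$ plays the role of $\wedge W_j$ and $\wedge P_H$ for the fibre $F_j=H$ plays the role of $\wedge V_j$; since the Stanley--Reisner ideal in that proposition is generated by elements $b_j\in W_j$, your assignment as written would put the ideal in the $P_H$ factors, contradicting your (correct) final statement that $I(K)$ is generated by products of elements of $\wedge V_{BH}\otimes\wedge P_G$.
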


\begin{ex}\label{ex:maximal_rank_subgroups}
With the same notation as above, suppose further that $\text{rank} \ \!G = \text{rank} \ \!H = N$.  Then, the sequence $(Bi)^*(y_j)$ for $j = 1, ..., N$ is regular. This enables us to deduce that $G/H$ is formal. There exists a sequence of quasi-isomorphisms 
\[
\xymatrix@C15pt@R18pt{
A_{PL}(G/H) & \wedge V_{BH}\otimes \wedge P_G =:{\mathcal M} \ar[l]_-\simeq \ar[r]^-{\simeq}_-u & H^*(G/H) = 
(\wedge V_{BH}/((Bi)^*(y_i),  d=0). 
}
\]
The naturality (Proposition \ref{prop:naturality}) of the rational model in Theorem \ref{thm:F-T} gives rise to a commutative diagram
\[
\xymatrix@C12pt@R18pt{
\text{lim}_{\sigma \in K} A_{PL}((G/H, \ast)^\sigma) &
\text{lim}_{\sigma \in K}\widetilde{D}^\sigma \ar[l]_-{\Phi}^-{\simeq} \ar[d]_{\simeq}^{u_1}& (\bigotimes^m\mathcal{M})/ I(K)=:B_1 \ar[l]_-{\alpha}^-{\simeq}  \ar[d]^{u_2}\\
 A_{PL}((G/H, \ast)^K) \ar[u]^-{\eta}_-{\simeq} & \text{lim}_{\sigma \in K}H^*(G/H)^\sigma  & \big(\bigotimes^m H^*(G/H)\big)/ I(K)=:B_2, \ar[l]_-{\alpha}^-{\simeq}} \\
\]
where 
$H^*(G/H)^\sigma := \bigotimes_{i \in \sigma} H^*(G/H)^\sigma  \otimes \bigotimes_{i \notin \sigma} \Q$, 
$u_1$ and $u_2$  are maps of CDGA's induced by $u$; see the sequence (\ref{eq:The_sequence}).  By virtue of \cite[Proposition 4.8]{P-R}, we see that 
the map $u_1$ is a quasi-isomorphism. Thus, 
the commutativity implies that $u_2$ is also a quasi-isomorphism. 
Observe that $I(K) = J(K)$ in the case that we deal with; see Section \ref{sect:F-T}. 
We consider the pushout diagram of $\ell$ along $u_2$ 
\begin{eqnarray}\label{eq:u_ell}
\xymatrix@C15pt@R18pt{
B_1 \ar[d]^{\simeq}_{u_2} \ar[r]^-\ell & ((\wedge P_H)^{\otimes m}\otimes \big(\bigotimes^m\mathcal{M} \big)\big/ \big. I(K), \partial) 
\ar[d]^{\widetilde{u_2}} \\
B_2  \ar[r]^-{\widetilde{\ell}} & ((\wedge P_H)^{\otimes m}\otimes \big(\bigotimes^m H^*(G/H) \big)\big/ \big. I(K), \partial)=:C \\
} 
\end{eqnarray}
where $\ell$ is the KS-extension induced by the rational model for $(G, H)^K$ in (\ref{eq:PolyProd-model}); see \cite[Chapter 1]{H} for a KS-extension. 
 It follows from \cite[Lemma 14.2]{F-H-T} that $\widetilde{u_2}$ is a quasi-isomorphism and hence $C$ is also a rational model for $(G, H)^K$. 

In particular, for the unitary group $U(n)$, the maximal torus $T$ and every  simplicial complex $K$ with $m$ vertices, we have a rational model for 
$(U(n), T)^K$ of the form 
\[
((\wedge (x_i, ..., x_n))^{\otimes m}\otimes \big(\bigotimes_{i=1}^m \Q[t_1, ..., t_n]/(\sigma_1, ..., \sigma_n) \big)\big/ \big. I(K), \partial),
\]
where $\partial (x_i) = t_i$ and $\sigma_k$ denotes the $k$th elementary symmetric polynomial. 
\end{ex}

\begin{ex}\label{ex:SU(n)} Let $K$ be arbitrary simplicial complex with $m$ vertices. 
By virtue of Propositions \ref{prop:naturality} and \ref{prop:PolyProd-model}, 
we see that the projection $q : (SU(n), SU(k))^K \to  (SU(n)/SU(k), *)^K$ admits a model given by 
\[
 \widetilde{q} : \big(\wedge (x_{k+1}, .., x_n)^{\otimes m}/I(K)), 0 \big) \to 
 \big(\wedge (x_2, .., x_k)^{\otimes m}\otimes (\wedge (x_{k+1}, .., x_n)^{\otimes m}/I(K)), 0 \big), 
\]
where $\widetilde{q}(x_i) = x_i$ for $k+1 \leq i \leq n$,  and  $\deg x_i = 2i -1$. 
Since the domain of $\widetilde{q}$ admits a Sullivan algebra, we can construct a KS-extesion for $\widetilde{q}$. 
Then, it follows from Lemma \ref{lem:replacement} that the projection $q$ is formalizable in the sense of Definition \ref{defn:formalizability}.

Suppose further that the $1$-skeleton of $K$ does not coincide with that of $\Delta^m$. 
Then, the minimal model for $(SU(n)/SU(k), *)^K$ has nontrivial differential whose quadratic part is also nontrivial; see \cite[pages 144-145]{F-H-T} for a way to construct a minimal model for a CDGA. Therefore, the result 
\cite[Theorem 21.6]{F-H-T} yields that the rational homotopy groups $\pi_*((SU(n)/SU(k), *)^K)_\Q$ and 
$\pi_*((SU(n), SU(k))^K)_\Q$ have nontrivial Whitehead products. Observe that the Whitehead product on $\pi_*((SU(n)/SU(k))_\Q$ vanishes. 
\end{ex}

Let $X$ be a pointed space and $\pi^*(X):= H^*(Q(\wedge V), d_0)$ the homology of the vector space of indecomposable elements of a Sullivan model $(\wedge V, d)$ for $X$, where $Q(\wedge V)$ is the vector space of indecomposable elements and $d_0$ denotes the linear part of the differential $d$. 
There is a natural map $\nu_X$ from $\pi^*(\wedge V)$ to $\text{Hom}(\pi_*(X), \Q)$ provided $\pi_*(X)$ is abelian. 
Moreover, $\nu_X$ is an isomorphism if $X$ is a nilpotent space whose fundamental group is abelian; see \cite[11.3]{B-G}. 

It follows from the proof of \cite[Proposition 15.13]{F-H-T} that the natural map $\nu_{( \ )}$ is compatible with the connecting homomorphisms of the dual to the 
homotopy exact sequence for a fibration and the homology exact sequence for $\pi^*( \ )$ if fundamental groups of spaces of the fibration are abelian. Then, 
by considering the middle vertical fibration $\mathcal{F}$ in (\ref{eq:fibrations}), we have 

\begin{lem}\label{lem:nu} 
For each space $X$ in the fibration $\mathcal{F}$, 
the map $\nu_X$ is an isomorphism. 
\end{lem}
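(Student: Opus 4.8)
The plan is to reduce the lemma to the criterion recalled immediately above, namely that $\nu_X$ is an isomorphism whenever $X$ is a nilpotent space whose fundamental group is abelian \cite[11.3]{B-G}. Accordingly I would verify that each of the three spaces in the fibration $\mathcal{F}$ --- the fibre $(H,H)^K = \Pi^m H$, the total space $(G,H)^K$, and the base $(G/H, \ast)^K$ --- is nilpotent with abelian fundamental group.

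For the fibre, $\Pi^m H$ is a finite product of copies of the connected compact Lie group $H$. A path-connected topological group is an H-space, hence a simple space; in particular it is nilpotent and has abelian fundamental group. Since these properties are inherited by finite products, $\Pi^m H$ is nilpotent with abelian $\pi_1$. For the base, I would use that $G/H$ is simply connected by hypothesis, so it admits a CW model with a single $0$-cell and no $1$-cells. Each $(G/H, \ast)^\sigma$ is then a finite product of copies of this model and hence also has no $1$-cells; passing to the union $\bigcup_{\sigma \in K}(G/H, \ast)^\sigma$, which is weakly equivalent to $(G/H, \ast)^K$ as explained after Definition \ref{defn:PolyProd}, yields a CW complex without $1$-cells. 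Consequently $(G/H, \ast)^K$ is simply connected, in particular nilpotent with abelian $\pi_1$.

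It remains to treat the total space $(G,H)^K$, and here I would invoke the fibration $\mathcal{F}$ itself. Its base $(G/H, \ast)^K$ is simply connected and its fibre $\Pi^m H$ is nilpotent, so $\mathcal{F}$ is a nilpotent fibration; by the standard result that the total space of such a fibration is nilpotent \cite{F-H-T}, the space $(G,H)^K$ is nilpotent. Finally, the homotopy exact sequence of $\mathcal{F}$ exhibits $\pi_1((G,H)^K)$ as a quotient of $\pi_1(\Pi^m H)$, which is abelian; hence $\pi_1((G,H)^K)$ is abelian. Applying the criterion \cite[11.3]{B-G} to all three spaces then yields the lemma.

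The step I expect to be the main obstacle is the total space: one must ensure that simple-connectivity of the base together with nilpotence of the fibre genuinely forces $\pi_1((G,H)^K)$ to act nilpotently on every higher homotopy group, not merely that $\pi_1$ is controlled. This is precisely where the nilpotent-fibration machinery enters, so I would be careful to pin down the exact statement being used; by contrast the fibre step is immediate, and the base step is purely combinatorial once a CW model with trivial $1$-skeleton is fixed.
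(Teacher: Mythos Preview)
Your argument is correct. The paper itself gives essentially no proof of this lemma: it is stated as a consequence of the paragraph preceding it, and the verification that $(G/H,\ast)^K$ is simply connected and that $\pi_1((G,H)^K)$ is abelian is deferred to the opening lines of the proof of Theorem~\ref{thm:Rational_homotopy} (via the Seifert--van Kampen theorem rather than your CW-model argument, though both are fine).

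Where your route and the paper's intended route diverge slightly is at the total space. You establish directly that $(G,H)^K$ is nilpotent, invoking the fact that a fibration over a simply connected base with nilpotent fibre has nilpotent total space, and then apply the Bousfield--Gugenheim criterion to all three spaces uniformly. The paper's phrasing instead points toward a five-lemma argument: having noted that $\nu_{(\ )}$ is compatible with the connecting homomorphisms of the two long exact sequences (dual homotopy and $\pi^*$), one needs only that $\nu$ is an isomorphism for the fibre and the base --- both immediate from \cite[11.3]{B-G} --- together with $\pi_1((G,H)^K)$ abelian so that $\nu_{(G,H)^K}$ is defined; the five-lemma then forces $\nu_{(G,H)^K}$ to be an isomorphism without ever checking nilpotence of the total space. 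This neatly bypasses exactly the step you flag as the main obstacle, and the compatibility fact is needed later in the proof of Theorem~\ref{thm:Rational_homotopy} anyway. Your direct approach is also valid and more self-contained for the lemma in isolation, though the citation to \cite{F-H-T} for the nilpotence-of-total-space statement is imprecise (the result is standard but is more naturally found in the nilpotent-space literature, e.g.\ Hilton--Mislin--Roitberg or \cite{B-G}).
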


\begin{proof}[Proof of Theorem \ref{thm:Rational_homotopy}]
We first observe that $(G/H, \ast)^K$ is simply-connected. This follows from the Seifert--van Kampen theorem. Then, we see that $\pi_1((G, H)^K)$ is an abelian group. 

In what follows, we consider a Sullivan model for $(G, H)^K$ with the same notation as in Example \ref{ex:maximal_rank_subgroups}. 
Let $\beta_0 : \bigotimes^m\mathcal{M}
\to \big(\bigotimes^m H^*(G/H)\big)/ I(K)=:B_2$ be the composite of the quasi-isomorphism $u_2: B_1\to B_2$ mentioned above and 
the projection $\bigotimes^m\mathcal{M} \to (\bigotimes^m\mathcal{M})/ I(K)$. Observe that $\mathcal{M}= 
\wedge (V_{BH}\oplus P_G)$. 
Extending $\beta_0$, we define a quasi-isomorphism 
\[
\beta :   A_1:=(\bigotimes^m\mathcal{M}) \otimes \wedge V= \wedge ((\oplus^m (V_{BH}\oplus P_G)) \oplus V) \stackrel{\simeq}{\longrightarrow} B_2.
\]

Let $d_0$ denote the linear part of the differential of $A_1$. In order to construct a minimal Sullivan model for $A_1$, we 
apply the procedure of the proof of \cite[Theorem 14.9]{F-H-T}. As a consequence, there exists an isomorphism 
$(\wedge W, d') \otimes \wedge (U\oplus dU) \cong A_1$ for which $\wedge (U\oplus dU)$ is a contractible CDGA, $(\wedge W, d')$ is minimal, $(\oplus^m (V_{BH}\oplus P_G)) \oplus V = U \oplus \text{Ker} \ \! d_0 = 
U \oplus d_0U \oplus W$ and $d_0(W) =0$. By the construction, we may assume that $\oplus^m (V_{BH}\oplus P_G) \subset W$. 
Then, we have a quasi-isomorphism $\beta' : (\wedge W, d') \stackrel{\simeq}{\to} B_2$ and a pushout diagram 
\begin{eqnarray}\label{eq:I}
\xymatrix@C20pt@R20pt{
(\wedge W, d') \ar[r]^-I \ar[d]_{\beta'}^{\simeq} &  ((\wedge P_H)^{\otimes m} \otimes \wedge W, \partial) \ar[d]^{\beta''} \\
B_2  \ar[r]_-{\widetilde{\ell}} &  (\wedge P_H)^{\otimes m} \otimes  B_2, 
}
\end{eqnarray}
in which $\partial(x_k) = t_k \in V_{BH}$ for $x_k \in P_H$ and $I$ is the canonical inclusion. 
Therefore, it follows that the map $\beta''$ is a quasi-isomorphism.  Moreover, we see that the bottom right CDGA in the square above is nothing but the CDGA $C$ in Example \ref{ex:maximal_rank_subgroups}. 

By applying Lemma \ref{lem:replacement} repeatedly to the diagram obtained by combining the diagrams (\ref{eq:u_ell}) with (\ref{eq:I}) and to a commutative square 
 given by the naturality of maps in (\ref{eq:The_sequence}), we have a commutative diagram 
\begin{eqnarray}\label{eq:sequences}
\xymatrix@C25pt@R20pt{
(\wedge W, d') \ar[r]^-I \ar[d]_{\simeq} &  ((\wedge P_H)^{\otimes m} \otimes \wedge W, \partial) \ar[d]^{\simeq} \\
A_{PL}((G/H, \ast)^K)  \ar[r]_{A_{PL}(q)} &  A_{PL}((G, H)^K).
}
\end{eqnarray}

Recall that $\oplus^m(V_{BG}\oplus P_G)$ is a subspace of $W$. Then, 
we may write $(\wedge V_{BH}\otimes \wedge P_G)^{\otimes m}\otimes \wedge W'$ for $\wedge W$. Thus, the upper sequence in the diagram (\ref{eq:sequences}) gives rise to a short exact sequence of complexes 
\[
\xymatrix@C12pt@R15pt{
0 & (\oplus^m P_H, 0) \ar[l] & (
(\oplus^m (P_H\oplus V_{BH}\oplus P_G)) \oplus W' , \partial_0) \ar[l] &   
(W, 0), \ar[l] & 0 \ar[l]
}
\]
in which the linear part $\partial_0$ of $\partial$ satisfies the condition that $\partial_0 : \wedge P_H \to \wedge V_{BH}$, $\partial_0(t_k) = y_k$ and $\partial_0|_{P_G \oplus W'} = 0$. The last equality follows from the assumption that $(Bi)^*x_k$ is decomposable for each $k$. 
The homology long exact sequence is decomposed into a short exact sequence of the form 
\[
\xymatrix@C12pt@R15pt{
0 &H(\oplus^m (P_H\oplus V_{BH}\oplus P_G)) \oplus W' , \partial_0) \ar[l] &   
(W, 0) \ar[l] & (\oplus^m P_H, 0) \ar[l]_-{d_0'} & 0, \ar[l]
}
\]
where $d_0'$ denotes the connecting homomorphism. In fact, the linear map $d_0'$ coincides with the composite $\oplus^m P_H \stackrel{\partial_0}{\longrightarrow}  \oplus^m (P_H\oplus V_{BH}\oplus P_G)) \oplus W' \stackrel{pr}{\longrightarrow} W$, where $pr$ is the projection; see the proof of \cite[Proposition 15.13]{F-H-T}. Thus, Lemma \ref{lem:nu} yields the result. 
\end{proof}

\begin{rem}
Let $G$ be a compact Lie group and $H$ be a closed subgroup for which $G/H$ is simply connected and $(Bi)^*(x_k)$ is {\it indecomposable} in $H^*(BH; \Q)$ for some generator $x_k$ of $H^*(BG; \Q)$. Then, we see that the connecting homomorphism 
$$\partial_* : \pi_*((G/H, \ast)^K)_\Q \to \pi_{*-1}(\Pi^m H)_\Q$$ is not surjective. The connecting homomorphism is natural with respect to maps between spaces. Then, in order to prove the fact, it suffices to show that the connecting homomorphism $\partial^* : \pi^*(H) \to \pi^{*+1}(G/H)$ is not injective; see the diagram (\ref{eq:fibrations}). To this end, we show that the map $i^* : \pi^*(G) \to \pi^*(H)$ is non trivial. 

Recall the surjective model 
$\rho : (\wedge V_{BH}\otimes \wedge P_G \otimes \wedge P_H, \partial)  \to (\wedge P_H, 0)$ 
for the inclusion $H \to G$ used in the construction of the model (\ref{eq:PolyProd-model}). Suppose that $(Bi)^*(x_k) = \sum_i \lambda_i t_i+$(decomposable element) for some generator $x_k$ in $H^*(BG; \Q)$, where $\lambda_i \neq 0$ for some $i$.  Then, it follows that $x_i +\sum_i \lambda_i u_i$ is a cocycle in the cochain complex $(Q(\wedge V_{BH}\otimes \wedge P_G \otimes \wedge P_H), \partial_0)$ and 
\[
i^*(x_i +\sum_i \lambda_i u_i ) = \sum_i\lambda_i u_i  \neq 0
\]
for 
$i^* = H(Q(\rho)) : H(Q(\wedge V_{BH}\otimes \wedge P_G \otimes \wedge P_H), \partial_0) \to H(Q(\wedge P_H), 0) = P_H$. 

For example, we see that 
$\partial_* : \pi_*((U(n)/T, \ast)^K)_\Q \to \pi_{*-1}(\Pi^m T)_\Q$ is not surjective for a maximal torus $T$ of $U(n)$. 
\end{rem}

\section{The formality of a compact toric manifold}\label{sect:formality}

We prove the following result by using the commutative diagram (\ref{eq:pullbacks}). 

\begin{thm}\label{thm:formality}
Every compact toric manifold $X_\Sigma$ is formal.
\end{thm}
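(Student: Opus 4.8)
The plan is to exploit the commutative diagram~(\ref{eq:pullbacks}) together with the rational model machinery developed in the previous sections. Recall that the toric manifold $X_\Sigma$ is homotopy equivalent to the Borel construction $EG\times_H \mathcal{Z}_K(\mathbb{C},\mathbb{C}^*)$, and that the right-hand square exhibits a map $\pi_H : X_\Sigma \to DJ(K) = (BS^1,*)^K$ with homotopy fibre $G/H \simeq L$, a torus. The key structural input is that $DJ(K)$ is formal: by Example~\ref{ex:DJ} its minimal model is connected by quasi-isomorphisms to $(SR(K),0)$, i.e. its rational cohomology is the Stanley--Reisner algebra, which has a model with zero differential. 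So the base of our fibration is formal, and moreover its cohomology is concentrated in even degrees.

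First I would set up an explicit relative Sullivan model for the bundle $\pi_H$. Since the fibre is a torus $L$ with $H^*(BL;\Q) = \wedge(s_1,\dots,s_r)$ on degree-one generators, and since this torus bundle is classified by the map $B\rho : BG \to BL$ (equivalently by the composite with $DJ(K)\to BG$), a model for the total space has the form $(SR(K)\otimes \wedge(s_1,\dots,s_r), D)$ with $D(s_j)$ equal to the image in $SR(K)$ of the degree-two classes pulled back along $B\rho$. Concretely these $D(s_j)$ are the linear combinations $\sum_i a_{ji}t_i$ of the Stanley--Reisner generators determined by the subtorus $H\subset (\mathbb{C}^*)^m$. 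The second step is to show this CDGA is formal. Because $SR(K)$ is generated in degree two and $D(s_j)\in SR(K)^2$, the elements $D(s_1),\dots,D(s_r)$ form a sequence in a polynomial-type even-degree algebra; the crucial point is that for a genuine toric manifold (not merely a partial quotient) this is a \emph{regular sequence} in $SR(K)$, which is the algebraic reflection of the freeness of the $H$-action and the fact that $H^{\mathrm{odd}}(X_\Sigma;\Q)=0$. Regularity forces $H^*(X_\Sigma;\Q) \cong SR(K)/(D(s_1),\dots,D(s_r))$ concentrated in even degrees.

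Granting regularity, the formality argument proceeds exactly as in Example~\ref{ex:maximal_rank_subgroups}: the quotient map $SR(K)\otimes\wedge(s_\bullet) \to H^*(X_\Sigma;\Q)$ can be realized as a quasi-isomorphism of CDGA's because the Koszul-type complex on a regular sequence has cohomology equal to the quotient ring in degree zero of the Koszul grading. More precisely, I would build the comparison square with vertical quasi-isomorphisms down to the cohomology and identify the relevant pushout/KS-extension, invoking \cite[Lemma~14.2]{F-H-T} to transport the quasi-isomorphism across the extension, just as was done to produce the model $C$ in Example~\ref{ex:maximal_rank_subgroups}. This yields a sequence of quasi-isomorphisms connecting a Sullivan model for $X_\Sigma$ to $(H^*(X_\Sigma;\Q),0)$, which is precisely formality.

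The main obstacle I expect is verifying the regularity of the sequence $D(s_1),\dots,D(s_r)$ in $SR(K)$ and, correspondingly, arranging the quasi-isomorphism to the cohomology with zero differential at the level of CDGA's rather than merely of graded vector spaces. Regularity itself should follow from standard toric geometry---the freeness of the $H$-action in Cox's construction, equivalently \cite[Theorem~5.1.11]{CLS}, guarantees that the fibration $\pi_H$ is totally non-homologous to zero rationally and that the Serre spectral sequence collapses, which algebraically is exactly the statement that the $D(s_j)$ act regularly. The delicate part is that the naive quotient model $(SR(K)\otimes\wedge(s_\bullet),D)$ is not minimal and its cohomology being the quotient ring requires the Koszul argument; I would need to confirm that no higher differentials or correction terms obstruct the direct comparison to $(H^*(X_\Sigma;\Q),0)$, which is where the even-degree concentration of both $SR(K)$ and the quotient does the decisive work, since a map out of an even-generated algebra into an even-concentrated target admits no room for homotopy corrections in the relevant degrees.
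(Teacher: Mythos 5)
Your overall strategy coincides with the paper's: both build the Koszul-type model $(SR(K)\otimes\wedge(s_1,\dots,s_n),D)$ with $D(s_j)=q^*(B\rho)^*(t_j')$ a linear form in the Stanley--Reisner generators, reduce formality to the regularity of the sequence $D(s_1),\dots,D(s_n)$ in $SR(K)$, and conclude via the quotient quasi-isomorphism onto $SR(K)/(D(s_1),\dots,D(s_n))\cong H^*(X_\Sigma;\Q)$. The genuine problem is your justification of regularity. You claim that freeness of the $H$-action guarantees that $\pi_H$ is totally non-homologous to zero and that the Serre spectral sequence of $L\to X_\Sigma\to DJ(K)$ collapses, and that this ``is exactly'' regularity. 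This is backwards: if that spectral sequence collapsed, then $H^*(X_\Sigma;\Q)$ would contain a copy of $H^1(L;\Q)\neq 0$ and hence have nontrivial odd cohomology. Regularity is equivalent to the degree-one fibre classes transgressing onto a regular sequence, i.e., to the spectral sequence being maximally degenerate rather than collapsing. Moreover, freeness of the action cannot be the decisive input: for a proper subtorus $H'\subset H$ the $H'$-action on $\mathcal{Z}_K(\mathbb{C},\mathbb{C}^*)$ is still free, yet the corresponding sequence fails to be regular --- this is precisely what the proof of Theorem \ref{thm:partial_quotients} establishes. Your proposed mechanism would therefore prove too much.

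The correct derivation, which you do gesture at in passing when you mention $H^{\mathrm{odd}}(X_\Sigma;\Q)=0$, is the one the paper uses: the vanishing of odd cohomology is quoted from toric geometry (\cite[Theorem 12.3.11]{CLS}); the model $C$ computes $\mathrm{Tor}_{H^*(BL)}(H^*(DJ(K)),\Q)$ once the generators $x_i$ are given bidegree $(-1,2)$, so $H^{\mathrm{odd}}(X_\Sigma;\Q)=0$ forces $\mathrm{Tor}^{-1,*}_{H^*(BL)}(H^*(DJ(K)),\Q)=0$; and Baum's characterization (Proposition \ref{prop:Baum}) converts this Tor-vanishing into regularity of the sequence. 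With that substitution, the remainder of your argument --- identifying the relative Sullivan model via the pushout along the formalizable composite $(B\rho)\circ q$ (Lemma \ref{lem:formalizability} together with \cite[Proposition 2.3.4]{Vigue}), and then defining the quasi-isomorphism by $t_j\mapsto t_j$ and $s_i\mapsto 0$ --- matches the paper's proof.
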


This result is proved in \cite{P-R, BMM}; see also \cite[Theorem 8.1.10]{B-P}. The proof of \cite[Proposition 3.1]{BMM} indeed uses the algebra structure of the cohomology of the toric manifold. We apply the F\'elix--Tanr\'e model for $DJ(K)$ in order to prove the fact. 

We also use a result due to Baum concerning a characterization of a regular sequence. 

\begin{prop}\label{prop:Baum}{\em (\cite[3.5 Proposition]{Baum})} Let $A$ be a connected commutative algebra and $a_1, ..., a_t$ elements of $A^{> 0}$. Set $\Lambda = \K[x_1, ..., x_t]$ with $\deg x_i = \deg a_i$ and consider $A$ to be a $\Lambda$-module by means of the map $f : \Lambda \to A$ defined by $f(x_i) = a_i$. Then the following are equivalent: 

\begin{itemize}
\item[\em (i)] $a_1, ..., a_t$ is a regular sequence. 
\item[\em (ii)] $\text{\em Tor}^{-1, *}_\Lambda(\K, A) = 0$. 
\item[\em (iii)] $\text{\em Tor}^{-j, *}_\Lambda(\K, A) = 0$ for all $j\geq 1$. 
\item[\em (iv)] $A$ is a projective $\Lambda$-module. 
\item[\em (v)] As a $\Lambda$-module $A$ is isomorphic to $\Lambda \otimes (A/(a_1, ..., a_t))$.
\end{itemize}

\end{prop}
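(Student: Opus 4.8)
The plan is to reduce every one of the five conditions to a statement about the Koszul complex on $a_1, \ldots, a_t$ and then to invoke the standard homological properties of that complex, paying special attention to the graded connected hypothesis on $A$. First I would write down the Koszul resolution of $\K$ over $\Lambda = \K[x_1, \ldots, x_t]$, namely the free complex $(\Lambda \otimes \wedge(y_1, \ldots, y_t), \partial)$ with $\deg y_i = \deg x_i$ and $\partial(y_i) = x_i$, which is acyclic in positive degrees because $x_1, \ldots, x_t$ is a regular sequence in the polynomial algebra. Tensoring this resolution with $A$ over $\Lambda$ and using $f(x_i) = a_i$ identifies
\[
\mathrm{Tor}^{-j, *}_\Lambda(\K, A) \cong H_j\big(A \otimes \wedge(y_1, \ldots, y_t), \partial_A\big), \qquad \partial_A(y_i) = a_i,
\]
that is, the $j$-th Koszul homology $H_j(a_1, \ldots, a_t; A)$. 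This single identification converts conditions (ii) and (iii) into statements about Koszul homology and is the backbone of the whole argument.

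Second, I would establish the equivalence of (i), (ii), (iii). The implication (i) $\Rightarrow$ (iii) is the classical fact that the Koszul complex on a regular sequence is a resolution, so its positive-degree homology vanishes, and (iii) $\Rightarrow$ (ii) is immediate. The substantive direction is (ii) $\Rightarrow$ (i): from $H_1(a_1, \ldots, a_t; A) = 0$ I must recover regularity. Here I would argue by induction on $t$ using the mapping-cone description $K(a_1, \ldots, a_t; A) = \mathrm{cone}\big(a_t \colon K(a_1, \ldots, a_{t-1}; A) \to K(a_1, \ldots, a_{t-1}; A)\big)$ and its long exact homology sequence, in which the connecting maps are multiplication by $a_t$. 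Surjectivity of multiplication by $a_t$ on $H_1(K(a_1,\ldots,a_{t-1};A))$ (read off from $H_1(K(a_1,\ldots,a_t;A))=0$), combined with the fact that this homology is non-negatively graded while $\deg a_t > 0$, forces it to vanish by a graded Nakayama argument; induction then gives regularity of $a_1,\ldots,a_{t-1}$, and the tail of the same long exact sequence shows $a_t$ is a nonzerodivisor on $A/(a_1,\ldots,a_{t-1})$. This is the step I expect to be the main obstacle, since for an arbitrary (ungraded) ring the vanishing of $H_1$ does not by itself yield a regular sequence; it is precisely the connectedness of $A$ and the positivity of the degrees of the $a_i$ that rescue the argument and make regularity insensitive to the order of the $a_i$.

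Finally, I would tie in the module-theoretic conditions (iv) and (v). For (iii) $\Rightarrow$ (iv) I would pass to a minimal graded free resolution of $A$ over the graded connected ring $\Lambda$ and observe that the vanishing of $\mathrm{Tor}^{-j}_\Lambda(\K, A)$ for $j \geq 1$ kills all higher terms, so $A$ is in fact free, hence projective. The implication (iv) $\Rightarrow$ (iii) is formal, since the symmetry of Tor gives $\mathrm{Tor}^{-j}_\Lambda(\K, A) \cong \mathrm{Tor}^{-j}_\Lambda(A, \K)$, which vanishes for $j \geq 1$ when $A$ is projective. For (v), starting from regularity I would lift a homogeneous $\K$-basis of $A/(a_1, \ldots, a_t)$ to elements of $A$ and show, using regularity together with the grading, that these lifts form a $\Lambda$-basis, giving the isomorphism $A \cong \Lambda \otimes (A/(a_1, \ldots, a_t))$; conversely such an isomorphism exhibits $A$ as a free, hence projective, $\Lambda$-module, closing the loop with (iv). Throughout, the graded connected setting is what lets me replace \emph{projective} by \emph{free} and lets the Koszul and Nakayama machinery run, the only real input being that $A$ and all the relevant homology modules are non-negatively graded while each $a_i$ strictly raises degree.
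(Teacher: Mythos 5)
The paper itself offers no proof of this proposition --- it is imported verbatim from Baum (\cite[3.5 Proposition]{Baum}) --- so there is no internal argument to compare against. Your Koszul-complex proof is correct and is essentially the standard one (and Baum's own): the identification of $\mathrm{Tor}^{-j,*}_\Lambda(\K,A)$ with the Koszul homology $H_j(a_1,\dots,a_t;A)$, the mapping-cone induction plus graded Nakayama for (ii) $\Rightarrow$ (i), and the minimal-free-resolution argument for (iv) and (v) are all sound, with the connected graded hypothesis and the positivity of the $\deg a_i$ entering exactly where you say they do (the only bookkeeping slip is that the exterior generator $y_i$ should carry homological degree $-1$ and internal degree $\deg x_i$, which does not affect the argument).
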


The following result gives a rational model for the toric manifold $X_\Sigma$ in the proof of Theorem \ref{thm:formality}. 

\begin{lem} \label{lem:formalizability}
The map $(B\rho) \circ q : DJ(K) \to BL'$ in the diagram (\ref{eq:pullbacks}) is formalizable; see Definition \ref{defn:formalizability} and the paragraph after the diagram (\ref{eq:pullbacks}). 
\end{lem}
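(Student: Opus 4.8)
The plan is to exhibit an explicit formalizing diagram for the map $(B\rho)\circ q : DJ(K) \to BL'$ by assembling pieces we already control. Recall from Example \ref{ex:DJ} that $DJ(K) = (BS^1, *)^K$ has the Stanley--Reisner algebra $SR(K) = \Q[t_1,\dots,t_m]/I(K)$ as a formal model, via the F\'elix--Tanr\'e model $(\wedge(t_1,\dots,t_m),0) \to (SR(K),0)$ with zero differential; in particular $DJ(K)$ is formal. On the other hand $BL'$ is a classifying space of a torus $L' = (\mathbb{C}^*)^m/H'$, hence its minimal Sullivan model is a polynomial algebra $(\wedge V_{BL'},0)$ on generators in degree $2$ with zero differential, so $BL'$ is formal as well. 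Thus both source and target have polynomial (formal, zero-differential) models, and the task reduces to checking that the induced algebra map on cohomology is realized by a map of these zero-differential models compatibly.

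First I would identify the map on cohomology. The map $B\rho : BG \to BL$ (with $G = (\mathbb{C}^*)^m$) is induced by the quotient homomorphism $\rho : G \to L'$, and on classifying spaces it induces a map of polynomial algebras $H^*(BL';\Q) \to H^*(BG;\Q) = \Q[t_1,\dots,t_m]$ sending each degree-two generator of $H^*(BL';\Q)$ to a linear form in the $t_i$ determined by the cocharacter lattice inclusion of $H' \hookrightarrow G$. Composing with $q^* : H^*(BG;\Q) \to H^*(DJ(K);\Q) = SR(K)$, which is simply the canonical projection $\Q[t_1,\dots,t_m] \twoheadrightarrow SR(K)$ (this is exactly the content of the remark that $EG\times_G \mathcal{Z}_K \simeq DJ(K)$ together with Example \ref{ex:DJ}), we obtain that $((B\rho)\circ q)^*$ sends each generator of $H^*(BL';\Q)$ to the class of a linear form in $SR(K)$.

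Next I would build the formalizing square of Definition \ref{defn:formalizability}. Take the minimal Sullivan model of $BL'$ to be $(\wedge V_{BL'},0)$ and the minimal Sullivan model of $DJ(K)$ to be the minimal model $(\wedge W, d)$ of $SR(K)$; the vertical quasi-isomorphisms to $H^*(BL';\Q)$ and $H^*(DJ(K);\Q) = SR(K)$ are the identity and the canonical minimal-model quasi-isomorphism, respectively. Because $(\wedge V_{BL'},0)$ is free with all generators in degree two and $d=0$, any algebra map $H^*(BL';\Q) \to SR(K)$ lifts through the quasi-isomorphism $(\wedge W,d) \to SR(K)$ to a CDGA map $(\wedge V_{BL'},0) \to (\wedge W,d)$, using freeness to choose cocycle representatives in $\wedge W$ of the target linear forms (these representatives exist and are cocycles precisely because the degree-two part of $(\wedge W,d)$ maps onto $(SR(K))^2$ with cocycles hitting the chosen classes). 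The lifted map is a genuine Sullivan representative for $(B\rho)\circ q$, by the Lifting Lemma \cite[Proposition 14.6]{F-H-T} applied to the models of source and target, and on cohomology it agrees with $((B\rho)\circ q)^*$ by construction. The bottom square, relating $H^*(BL';\Q)$ and $SR(K)$ via $((B\rho)\circ q)^*$, then commutes up to homotopy, which is exactly the formalizability condition.

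The main obstacle I expect is the verification that the chosen lift is homotopic to an honest Sullivan representative of the geometric map, rather than merely realizing the correct map on cohomology; that is, matching the two vertical quasi-isomorphisms in Definition \ref{defn:formalizability} simultaneously. This is where I would lean on the naturality established in Proposition \ref{prop:naturality} and on the explicit F\'elix--Tanr\'e model for $DJ(K)$ from Example \ref{ex:DJ}, together with the observation that $q$ itself already has a formal (zero-differential) model as the projection $\Q[t_1,\dots,t_m] \to SR(K)$. Since both $B\rho$ (a map of classifying spaces of tori, hence of polynomial algebras) and $q$ are individually formalizable with compatible zero-differential models, the composite inherits a formalizing diagram by pasting the two squares; the only genuine checking is that the pasted vertical maps remain quasi-isomorphisms, which holds because each constituent vertical map is one and minimal models compose.
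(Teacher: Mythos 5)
Your proposal is correct and follows essentially the same route as the paper, whose entire proof is the observation that "the result follows from the same argument as in Example \ref{ex:DJ}": both source and target admit zero-differential polynomial (Stanley--Reisner, resp.\ symmetric-algebra) models, and the map is modeled by the induced algebra map sending degree-two generators of $H^*(BL';\Q)$ to linear forms in $SR(K)$. The extra care you take in lifting through the minimal model of $SR(K)$ and matching the two vertical quasi-isomorphisms is exactly the detail the paper leaves implicit here and only spells out later, in the proof of Proposition \ref{prop:relatively_formalizable}.
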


\begin{proof} 
The result follows from the same argument as in Example \ref{ex:DJ}. 
\end{proof}

\begin{proof}[Proof of Theorem \ref{thm:formality}] Let $\{ v_j\}_{j=1}^m$ be the set of $1$-dimensional cones of the fan $\Sigma$ of $n$ dimension. Each $v_i$ is in the lattice $N$ of $\R^n$ which defines the fan $\Sigma$. Then, it follows from the construction of the diagram (\ref{eq:pullbacks}) that $H^*(BL) \cong \Q[t_1', ..., t_n']$ as an algebra. Observe that $\dim L = \dim \Sigma =n$. Moreover, we see that for $i = 1, ..., n$, 
\[
(B\rho)^* (t_i') = \sum_{j=1}^m \langle m_i, v_j \rangle t_j, 
\]
where $t_j$ denotes the generator of $H^*(BG)\cong \Q[t_i, ..., t_m]$ and $m_i$ is the dual basis for $M:=\text{Hom}(N, {\mathbb Z})$. 
The F\'elix--Tanr\'e model for $DJ(K)$ is of the form $(SR(K)=\Q[t_i, ..., t_m]/I(K), 0)$ for which $q^*(t_j) = t_j$ for $j = 1, ..., m$. Consider the pushout construction of models (\cite{H, F-H-T}) for the pullback (\ref{eq:pullbacks}). 
Then, by Lemma \ref{lem:formalizability} and \cite[Proposition 2.3.4]{Vigue}, 
we have a rational model for $X_\Sigma$ of the form 
\[
 C:= (\wedge (x_1, ..., x_n) \otimes SR(K), d(x_i) = q^*(B\rho)^*(t_i')= \sum_{j=1}^m \langle m_i, v_j \rangle t_j), 
\]
where $\deg x_i = 1$. This also computes the torsion functor $\text{Tor}_{H^*(BL)}^{*,*}(H^*(DJ(K)), \Q)$ 
if we assign a bidegree $(-1, 2)$ to each $x_i$. 
The result \cite[Theorem 12.3.11]{CLS} asserts that $H^{\text{odd}}(X_\Sigma; \Q) =0$. This implies that 
$\text{Tor}_{H^*(BL)}^{-1,*}(H^*(DJ(K)), \Q)=0$. It follows from Proposition \ref{prop:Baum} that 
$q^*(B\rho)^*(t_1'), ..., q^*(B\rho)^*(t_n')$ is a regular sequence in $SR(K)$. Thus, we have a quasi-isomorphism 
$$f : C \to SR(K)\big/ (d(x_i) ; i = 1, ..., n )= H^*(X_\Sigma; \Q)$$ defined by $f(t_j) = t_j$ and $f(x_i) =0$. 
\end{proof}


\begin{rem}
We can also obtain the rational cohomology of the compact toric manifold by using the Eilenberg-Moore spectral sequence for the pullback 
(\ref{eq:pullbacks}). In fact, it follows from the computation of the spectral sequence that, as algebras, 
\[
H^*(X_\Sigma) \cong \text{Tor}_{H^*(BL)}(H^*(DJ(K)), \Q) \cong 
SR(K)\Big/ \big(\sum_{j=1}^m \langle m_i, v_j \rangle t_j \big).
\]
\end{rem}

One might be aware that the consideration above for the polyhedral product $({\mathbb C}, {\mathbb C}^*)^K$ is applicable to more general one, for example, 
$(EG, G)^K$ for a connected Lie group $G$. In fact, for a simplicial complex $K$ with $m$ vertices, we have (homotopy) pullback diagrams
\begin{eqnarray}\label{eq:pullbacks2}
\xymatrix@C20pt@R15pt{
X_{K, (G, H)} :=E(\Pi^m G) \times_H  (EG, G)^K \ar[r]^-p\ar[d]_{\pi}&  E(\Pi^m G)/H \ar[r]\ar[d]& EL\ar[d] \\
E(\Pi^m G) \times_{\Pi^m G} (EG, G)^K \ar[r]_-q&  B(\Pi^m G) \ar[r]_{B\rho}& BL, 
}
\end{eqnarray}
where $H$ is a normal (not necessarily connected) closed subgroup of $(\Pi^m G) $ and $L = (\Pi^m G) /H$. The result \cite[Lemma 2.3.2]{D-S} yields that the natural map $E(\Pi^m G) \times_{\Pi^m G} (EG, G)^K \stackrel{\simeq}{\to} (BG, \ast)^K$ is a homotopy equivalence. Then, we have 

\begin{thm}\label{thm:BorelCoh} Suppose that $H^{\text{\em odd}}(X_{K, (G, H)}; \Q)=0$. Then $X_{K, (G, H)}$ is formal. 
\end{thm}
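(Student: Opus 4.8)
The plan is to mirror the proof of Theorem \ref{thm:formality} for the toric case, replacing the explicit polyhedral product $({\mathbb C}, {\mathbb C}^*)^K$ with $(EG, G)^K$ and the torus $G=({\mathbb C}^*)^m$ with the connected Lie group $\Pi^m G$. The key structural input is the commutative diagram \eqref{eq:pullbacks2} of (homotopy) pullbacks, together with the homotopy equivalence $E(\Pi^m G)\times_{\Pi^m G}(EG, G)^K \simeq (BG, \ast)^K$, which is the analogue of the identification $EG\times_G \mathcal{Z}_K({\mathbb C}, {\mathbb C}^*)\simeq DJ(K)$. So the Borel construction $X_{K,(G,H)}=\pi$-total-space plays the role of $X_\Sigma$, while $(BG,\ast)^K$ plays the role of $DJ(K)$.

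First I would show that the composite $(B\rho)\circ q\colon (BG,\ast)^K \to BL$ is formalizable, exactly as in Lemma \ref{lem:formalizability}: by Proposition \ref{prop:naturality} and Example \ref{ex:DJ}, the inclusion $(BG,\ast)^K \to \Pi^m(BG)$ admits the formalizable SR-type model, and composing with the (formalizable) map $B\rho$ induced by a quotient of Lie groups keeps formalizability, since $H^*(BL;\Q)$ is polynomial and the induced map is just a map of polynomial algebras. Next, using the pushout construction of models applied to the left-hand pullback square in \eqref{eq:pullbacks2} together with \cite[Proposition 2.3.4]{Vigue}, I would write down an explicit rational model for $X_{K,(G,H)}$ of the form $(\wedge Z \otimes H^*((BG,\ast)^K;\Q), d)$, where $Z$ is the space of fibre generators $\pi^*$ pulls back from $BL$, and $d$ sends each generator of $Z$ to the image in $H^*((BG,\ast)^K;\Q)$ of the corresponding polynomial generator of $H^*(BL;\Q)$ under $q^*(B\rho)^*$. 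This is the direct analogue of the CDGA $C$ in the proof of Theorem \ref{thm:formality}.

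The decisive step is then to invoke the hypothesis $H^{\text{odd}}(X_{K,(G,H)};\Q)=0$ together with Baum's criterion (Proposition \ref{prop:Baum}). Since the model $C$ computes $\text{Tor}_{H^*(BL)}^{*,*}(H^*((BG,\ast)^K),\Q)$ when each fibre generator is assigned the appropriate negative homological bidegree, the vanishing of odd cohomology forces $\text{Tor}^{-1,*}_{H^*(BL)}(\Q, H^*((BG,\ast)^K))=0$, so by Proposition \ref{prop:Baum} the sequence $q^*(B\rho)^*(t_1'), \dots, q^*(B\rho)^*(t_r')$ of images of the polynomial generators of $H^*(BL;\Q)$ is a regular sequence in $H^*((BG,\ast)^K;\Q)$. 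This regularity yields a quasi-isomorphism from $C$ onto its quotient $H^*((BG,\ast)^K;\Q)/(d Z)$ equipped with zero differential, which is precisely $H^*(X_{K,(G,H)};\Q)$; composing with the zig-zag connecting $C$ to $A_{PL}(X_{K,(G,H)})$ exhibits the formality of $X_{K,(G,H)}$.

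The main obstacle I anticipate is not the Baum/regular-sequence step, which is essentially formal once the model $C$ is in hand, but rather verifying that the hypotheses of \cite[Proposition 2.3.4]{Vigue} and the pushout-of-models construction genuinely apply in this more general setting. In the toric case one uses that $L$ is a torus, so $BL$ is formal with polynomial cohomology and the relevant fibrations are principal torus bundles; here $L=(\Pi^m G)/H$ need only be a Lie group, so I would need to confirm that $BL$ still has polynomial (even-degree) rational cohomology and that $(B\rho)\circ q$ remains formalizable with a purely even target, so that the fibre generators $Z$ sit in odd degree and the Tor-bidegree bookkeeping matches. Provided $G$ (and hence $H$ and $L$, up to rational homotopy) has the rational cohomology of a product of odd spheres so that $BL$ is rationally polynomial, the argument goes through verbatim; the care lies in making these structural identifications precise rather than in any new calculation.
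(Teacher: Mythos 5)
Your proposal is correct and follows essentially the same route as the paper, which obtains Theorem \ref{thm:BorelCoh} precisely by transporting the proof of Theorem \ref{thm:formality} (formalizability of $(B\rho)\circ q$ as in Lemma \ref{lem:formalizability}, the Vigu\'e pushout model, and Baum's criterion applied to $\text{Tor}^{-1,*}_{H^*(BL)}$) to the diagram (\ref{eq:pullbacks2}). Your closing worry is harmless: $L=(\Pi^m G)/H$ is a connected Lie group, so $H^*(BL;\Q)$ is automatically a polynomial algebra on even-degree generators and the odd-degree/bidegree bookkeeping goes through exactly as in Remark \ref{rem:Model_X}.
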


Theorem \ref{thm:partial_quotients} asserts that the condition in Theorem \ref{thm:BorelCoh} is satisfied only for the toric manifold $M$ among partial quotients associated with $M$. 

\begin{cor}\label{cor:Rational_rigidity} With the same notation as above, suppose that $H^{\text{\em odd}}(X_{K, (G, H)}; \Q)=0$ and 
$H^*(X_{K, (G, H)}; \Q)\cong H^*(X_{K', (G', H')}; \Q)$. Then $X_{K, (G, H)}\simeq_\Q X_{K', (G', H')}$ if the spaces are nilpotent. 
\end{cor}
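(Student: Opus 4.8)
The plan is to reduce the statement to the formality criterion of Theorem \ref{thm:BorelCoh} together with the classical fact that a formal nilpotent space of finite rational type is determined, up to rational homotopy, by its rational cohomology algebra.

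First I would apply Theorem \ref{thm:BorelCoh} directly to $X_{K, (G, H)}$: since $H^{\text{odd}}(X_{K, (G, H)}; \Q)=0$ by hypothesis, the space $X_{K, (G, H)}$ is formal. Next, observe that the assumed isomorphism $H^*(X_{K, (G, H)}; \Q)\cong H^*(X_{K', (G', H')}; \Q)$ of graded algebras transports the vanishing of odd-degree cohomology, so that $H^{\text{odd}}(X_{K', (G', H')}; \Q)=0$ as well. Since $X_{K', (G', H')}$ is again a space of the form appearing in the diagram (\ref{eq:pullbacks2}), Theorem \ref{thm:BorelCoh} applies to it and yields that $X_{K', (G', H')}$ is also formal.

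It then remains to combine formality with the isomorphism of cohomology algebras. For a formal space, its minimal Sullivan model is quasi-isomorphic to its cohomology algebra regarded as a CDGA with zero differential, and hence, by the uniqueness of minimal Sullivan models (\cite[14(b) Corollary]{F-H-T}), it is \emph{isomorphic} to the minimal Sullivan model of that cohomology algebra. Since $(H^*(X_{K, (G, H)}; \Q), 0)$ and $(H^*(X_{K', (G', H')}; \Q), 0)$ are isomorphic CDGA's, their minimal Sullivan models are isomorphic; by the previous sentence, so are the minimal Sullivan models of $X_{K, (G, H)}$ and $X_{K', (G', H')}$. As both spaces are assumed nilpotent and are of finite rational type, Sullivan's correspondence between minimal Sullivan algebras and rational homotopy types then yields the rational homotopy equivalence $X_{K, (G, H)}\simeq_\Q X_{K', (G', H')}$.

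The main obstacle is bookkeeping rather than conceptual: one must verify that $X_{K', (G', H')}$ genuinely falls under the hypotheses of Theorem \ref{thm:BorelCoh}, so that its formality can be deduced from the transported cohomological vanishing, and one must check that the nilpotency assumption is exactly what is needed to promote an isomorphism of minimal Sullivan models to an honest rational homotopy equivalence. The formality of \emph{both} spaces is the essential input; once it is secured, the identification of rational homotopy types is standard.
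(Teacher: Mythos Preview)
Your argument is correct and is precisely the intended one: the paper states this as an immediate corollary of Theorem \ref{thm:BorelCoh} without giving any further proof, and your write-up simply spells out the standard deduction (transport the odd-degree vanishing along the cohomology isomorphism, apply Theorem \ref{thm:BorelCoh} to both spaces, then invoke that formal nilpotent spaces of finite rational type with isomorphic cohomology algebras have isomorphic minimal Sullivan models and hence the same rational homotopy type).
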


\begin{rem}\label{rem:Model_X}
Suppose that $H^*(BL) \cong \Q[t_1', ..., t_n']$. Under the same assumption as in Theorem \ref{thm:BorelCoh}, we see that $X_{K, (G, H)}$ admits a rational model of the form 
\[
(\wedge (x_1, ..., x_n) \otimes (\bigotimes^m H^*(G; \Q)\big/ I(K)), d(x_i) = q^*(B\rho)^*(t_i')) 
\]
in which $q^*(B\rho)^*(t_1'), ..., q^*(B\rho)^*(t_n')$ is a regular sequence. 
\end{rem}

We observe that, for a compact smooth toric manifold $X_\Sigma$, there is a homotopy equivalence $X_\Sigma \simeq X_{K, (({\mathbb C}^*)^m, H)}$ for which $K$ is a simplicial complex associated with the fan $\Sigma$ and 
$\mathcal{Z}_K({\mathbb C}, {\mathbb C}^*)/H$ is Cox's construction for $X_\Sigma$. Moreover, the toric manifold $X_\Sigma$ is simply connected and hence nilpotent. Thus, Corollary \ref{cor:Rational_rigidity} is regarded as an answer of  
the {\it rational cohomological rigidity problem} for toric manifolds

\section{The (non)formalizability of partial quotients}\label{sect:formalizability}
We begin by considering formalizability for toric manifolds.

\begin{prop}\label{prop:relatively_formalizable}
The map $\pi_H : X_\Sigma \to DJ(K)$ in (\ref{eq:pullbacks}) is formalizable. 
\end{prop}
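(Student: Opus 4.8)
The plan is to build the formalizing diagram of Definition~\ref{defn:formalizability} for $\pi_H$ by assembling pieces that the preceding sections already supply. Recall the Borel construction identifies the top-left corner of the pullback~(\ref{eq:pullbacks}) with $X_\Sigma$, so $\pi_H$ is exactly the left vertical map $\pi_H : X_\Sigma \simeq EG\times_H \mathcal{Z}_K(\mathbb{C},\mathbb{C}^*) \to EG\times_G \mathcal{Z}_K(\mathbb{C},\mathbb{C}^*) \simeq DJ(K)$. The key observation is that the proof of Theorem~\ref{thm:formality} has already produced an \emph{explicit} rational model for $X_\Sigma$, namely
\[
C = \bigl(\wedge(x_1,\dots,x_n)\otimes SR(K),\ d(x_i)=q^*(B\rho)^*(t_i')=\textstyle\sum_{j=1}^m\langle m_i,v_j\rangle t_j\bigr),
\]
together with the quasi-isomorphism $f : C \to H^*(X_\Sigma;\Q)$, and it has shown that the $d(x_i)$ form a regular sequence in $SR(K)$ because $H^{\mathrm{odd}}(X_\Sigma;\Q)=0$. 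Since $DJ(K)$ is formal with model $(SR(K),0)$ (Example~\ref{ex:DJ}, Lemma~\ref{lem:formalizability}) and the map $\pi_H$ corresponds on models to the inclusion $(SR(K),0)\hookrightarrow C$, the natural candidate for the formalizing square is to take $(SR(K),0)$ on the base and $C$ on the total space.

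First I would record that $C$ as constructed is a minimal relative Sullivan model of $\pi_H$ over $(SR(K),0)$, via the pushout of models for the pullback~(\ref{eq:pullbacks}); this is precisely how $C$ arose in the proof of Theorem~\ref{thm:formality} using \cite[Proposition 2.3.4]{Vigue} and Lemma~\ref{lem:formalizability}. However, Definition~\ref{defn:formalizability} demands \emph{minimal Sullivan algebras} in the upper corners, and while $(SR(K),0)$ is a legitimate (minimal) Sullivan algebra for $DJ(K)$, the relative object $C$ need not itself be minimal as an absolute Sullivan algebra. So the plan is to pass to the genuine minimal Sullivan model $(\wedge Z,d')\xrightarrow{\simeq}C$ for $X_\Sigma$, and likewise use $(\wedge W, d)=(SR(K),0)$ for $DJ(K)$ (which is already minimal), filling in the top horizontal arrow $l$ as the composite $(\wedge W,d)\to C\to(\wedge Z,d')$ lifted through the minimal-model quasi-isomorphism via the Lifting lemma \cite[Proposition 14.6]{F-H-T}.

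The formalizing diagram then has its bottom row $p^*:H^*(DJ(K);\Q)\to H^*(X_\Sigma;\Q)$, its top row $A_{PL}(\pi_H)$, and vertical quasi-isomorphisms on both sides; the left column is the identity-style formality of $DJ(K)$ and the right column is the composite $(\wedge Z,d')\xrightarrow{\simeq}C\xrightarrow{f}H^*(X_\Sigma;\Q)$, where $f$ is the quasi-isomorphism already exhibited in Theorem~\ref{thm:formality}. The only nontrivial commutativity to verify is that the square built from $l$, $p^*$, and these two formality maps commutes up to homotopy; this reduces to checking that $f$ restricted along the image of $SR(K)$ agrees with $p^*$, which holds because $f(t_j)=t_j$ by construction and $p^*$ is induced by the same map $SR(K)\to SR(K)/(d(x_i))$.

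The main obstacle I expect is the bookkeeping of minimality and homotopy commutativity: $C$ is produced as a \emph{relative} Sullivan model and is not literally a minimal absolute one, so I must carefully replace it by its minimal model $(\wedge Z,d')$ while preserving the compatibility of all four maps up to homotopy in the sense of \cite[12(b)]{F-H-T}. The substance of the argument — that the regular-sequence condition makes the formality map $f:C\to H^*(X_\Sigma;\Q)$ a quasi-isomorphism — is already established in Theorem~\ref{thm:formality} via Proposition~\ref{prop:Baum}, so this proof should be essentially a repackaging of that computation into the shape of Definition~\ref{defn:formalizability}, invoking the Lifting lemma to realize the homotopy-commutative square by strict maps between minimal models.
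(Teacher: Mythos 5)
Your overall strategy is the paper's: take the model $C$ and the quasi-isomorphism $f$ produced in the proof of Theorem \ref{thm:formality}, combine them with the formality of $DJ(K)$, and package the result as the diagram of Definition \ref{defn:formalizability}. But there is a genuine gap at the base of your square: you assert that $(SR(K),0)$ ``is a legitimate (minimal) Sullivan algebra for $DJ(K)$ (which is already minimal).'' It is not a Sullivan algebra at all. $SR(K)=\Q[t_1,\dots,t_m]/I(K)$ is a proper quotient of a free algebra whenever $K\neq \Delta^{m-1}$ (e.g.\ for $K$ two disjoint vertices it is $\Q[t_1,t_2]/(t_1t_2)$), so it is not free and cannot occupy the upper-left corner of the formalizability diagram. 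This is not mere bookkeeping: the left vertical quasi-isomorphism into $A_{PL}(DJ(K))$ has to be produced by the Lifting lemma, which requires the source to be a Sullivan (cofibrant) algebra, and your candidate $l$ would likewise have to be lifted out of a non-cofibrant object. The same defect propagates to the total space, since your $C=\wedge(x_1,\dots,x_n)\otimes SR(K)$ is not a Sullivan algebra either, so ``its minimal model $(\wedge Z,d')$'' cannot be reached by a single quasi-isomorphism in the direction you need without further argument.

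The paper repairs exactly this point: it constructs a minimal Sullivan model $\phi:\wedge W\to SR(K)$ with $W=\Q\{t_1,\dots,t_m\}\oplus V$, $\phi(V)=0$ and $V=V^{\geq 2}$, lifts $\phi$ through the quasi-isomorphisms of (\ref{eq:The_sequence}) to get $\phi':\wedge W\to A_{PL}(DJ(K))$ (this is where formality of $DJ(K)$ is actually established), and then uses the KS-extension $\wedge W\to \wedge(x_1,\dots,x_n)\otimes\wedge W$ as the model of $\pi_H$, with $f$ from Theorem \ref{thm:formality} on the right. Commutativity of the bottom square is then checked on generators exactly as you indicate ($\phi(t_j)=t_j$, $f(t_j)=t_j$, $\phi|_V\equiv 0$). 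So your identification of the key ingredients is right, but you must route the argument through $\wedge W$ rather than through $SR(K)$ itself; with that substitution your outline becomes the paper's proof.
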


\begin{proof}
By considering  the sequence (\ref{eq:The_sequence}) for $(X, A)= (BS^1, *)$, we have quasi-isomorphisms connecting 
$A_{PL}(DJ(K))$ with $SR(K)$ the Stanley--Reisner algebra. 
We can construct a minimal model $\phi : \wedge W \to SR(K)$ for $SR(K)$ 
so that $W= \Q\{t_1, ...,  t_m\}\oplus V$, where $t_1, ..., t_m$ give the generators of $SR(K)$, $\phi(V) = 0$ and $V= V^{\geq 2}$. The Lifting lemma (\cite[Proposition 12.9]{F-H-T}) yields a quasi-isomorphism $\phi' : \wedge W \to A_{PL}(DJ(K))$. 
Thus, in particular, the Davis--Januszkiewicz space $DJ(K)$ is formal. The pushout construction in the proof of Theorem \ref{thm:formality} gives rise to a commutative diagram
\[
\xymatrix@C20pt@R15pt{
A_{PL}((DJ(K)) \ar[r]^{A_{PL}(\pi_H)} & A_{PL}(X_\Sigma) \\
\wedge W \ar[r]^-i \ar[u]_{\simeq}^{\phi'} &  \wedge (x_1, ..., x_n)\otimes \wedge W,  \ar[u]^{\simeq}
}
\]
where $i$ is a KS-extension. 
Moreover, we have a commutative 
diagram of CDGA's
\[
\xymatrix@C20pt@R15pt{
\wedge W \ar[r]^-i \ar[d]^{\simeq}_\phi &  \wedge (x_1, ..., x_n)\otimes \wedge W \ar[d]_{\simeq}^f \\
H^*(DJ(K); \Q)  \ar[r]_{(\pi_H)^*} & H^*(X_\Sigma; \Q) 
}
\]
in which $\phi$ is a quasi-isomorphism defined by $\phi(t_j)=t_j$ and $\phi|_{V} \equiv 0$, the map $i$ is an extension and  $f$ is 
the quasi-isomorphism given in the end of the proof of Theorem \ref{thm:formality}.  
\end{proof}

\begin{proof}[Proof of Theorem \ref{thm:partial_quotients}] Let $\mathcal{Z}_K$ denote the space $\mathcal{Z}_K({\mathbb C}, {\mathbb C}^*)$. 
Suppose that $H = H'$. Then the partial quotient $\mathcal{Z}_K/H'$ is nothing but the toric manifold $X_\Sigma$. 
Then, the result \cite[Theorem 12.3.11]{CLS} implies the assertion (ii).  

We recall the proof of Theorem \ref{thm:formality}.  Then, we have a rational model 
\begin{eqnarray}\label{eq: C'}
\hspace{1cm}
 C':= (\wedge (x_1, ..., x_l) \otimes SR(K), d(x_i) = q^*(B\rho)^*(t_i') ) 
\end{eqnarray}
for $\mathcal{Z}_K/H'$. 
Under the assumption (ii), by Proposition \ref{prop:Baum}, we see that the sequence $d(x_1), ..., d(x_l)$ is regular. 
Thus the same argument as in the proof of Proposition \ref{prop:relatively_formalizable} yields (iii). 

Suppose that $H'$ is a connected proper subgroup of $H$. We show that $\pi_{H'}$ is not formalizable. 
We may replace three spaces $\mathcal{Z}_K$, the tori $H$ and $H'$ acting the moment-angle manifold with the polyhedral product 
$(D^2, S^1)^K$, a compact Lie group $T^k$ and its subtorus with an appropriate integer $k$, respectively. 
Assume that the fan $\Sigma$ has $m$ rays and hence $K$ is a simplicial complex with $m$ vertices. If the fan is of dimension $n$, then we have an exact sequence 
$1 \to T^k \to T^m \stackrel{\rho}{\to} T^n \to 1$ via Cox's construction of the toric manifold $X_\Sigma$. 
With the same notation as in the proof of Theorem \ref{thm:formality}, since $d(x_1), ..., d(x_n)$ is a regular sequence, it follows from Proposition \ref{prop:Baum} that 
$
H^*(DJ(K)) \cong \Q[t_1, ..., t_n]\otimes H^*(X_\Sigma)
$
as a $\Q[t_1, ..., t_n]$-module. 

For a proper subgroup $H'$ of $T^k$, the quotient $L'$ of the inclusion $H' \to T^m$ is the torus of dimension $l$ greater than $n$. 
Consider the rational model (\ref{eq: C'}) for $\mathcal{Z}_K/H'$. 
We assume that $d(x_1), ..., d(x_l)$ is a regular sequence. 
Then, by the same argument as in the proof of Theorem \ref{thm:formality} with the diagram (\ref{eq:pullbacks}), we have  
$H^*(DJ(K)) \cong \Q[t_1, ..., t_l]\otimes H^*(\mathcal{Z}_K/H')$ as 
a $\Q[t_1, ..., t_l]$-module. By considering the Poincar\'e series of $H^*(DJ(K))$ in two ways, we have an equality 
\[
\frac{1}{\Pi^{l-n}(1-t^2)} P_1(t) =  P_2(t),
\]
where $P_1(t)$ and  $P_2(t)$ are the Poincar\'e series of  $H^*(\mathcal{Z}_K/H')$ and $H^*(\mathcal{Z}_K/H)\cong H^*(X_\Sigma)$, respectively. Since the partial quotients are manifolds of finite dimensions, it follows that $P_1(t)$ and  $P_2(t)$ are polynomials. This contradicts the equality above and hence 
$d(x_1), ..., d(x_l)$ is not a regular sequence.  

Suppose that the map $\pi_{H'} : \mathcal{Z}_K/H' \to DJ(K)$ is formalizable. 
By virtue of  \cite[Proposition 2.3.4]{Vigue}, we have a commutative diagram 
\[
\xymatrix@C20pt@R12pt{
A_{PL}((DJ(K)) \ar[r]^{A_{PL}(\pi_{H'})} & A_{PL}(\mathcal{Z}_K/H') \\
\wedge W \ar[r]^-i \ar[d]^{\simeq}_\phi \ar[u]^{\phi'}_{\simeq}&  \wedge (x_1, ..., x_l)\otimes \wedge W \ar[d]^{\simeq}_\eta \ar[u]_{\simeq}\\
H^*(DJ(K); \Q)  \ar[r]_{(\pi_{H'})^*} & H^*(\mathcal{Z}_K/H'; \Q).
}
\]

Consider the fibration $\mathcal{Z}_K \to EG\times_{H'}\mathcal{Z}_K \stackrel{p}{\to} (EG)/H'$ which fits in the diagram (\ref{eq:pullbacks}). The argument in \cite[4.1]{D-S} enables us to conclude that $\mathcal{Z}_K$ is $2$-connected; see also \cite[Proposition 4.3.5]{B-P}. Thus, 
the homotopy exact sequence of the fibration above yields that 
$\mathcal{Z}_K/H' \simeq EG\times_{H'}\mathcal{Z}_K$ is simply connected and hence $H^1(\mathcal{Z}_K/H')=0$. 

By Lemma \ref{lem:formalizability} and \cite[Proposition 2.3.4]{Vigue}, 
we see that $\text{Tor}^{*}_{\Q[t_1, ..., t_l]}(\Q, SR(K))\cong \text{Tor}^{*}_{ \Q[t_1, ..., t_l]}(\Q, \wedge W)$. This implies that the spectral sequence converging to the torsion group $\text{Tor}^{*}_{ \Q[t_1, ..., t_l]}(\Q, \wedge W)$ with 
$E_2^{*,*}\cong \text{Tor}^{*,*}_{\Q[t_1, ..., t_l]}(\Q, SR(K))$ collapses at the $E_2$-term. Then, this fact allows us to obtain a sequence 
\[
\xymatrix@C20pt@R16pt{
\text{Tor}^{-1,*}_{P}(\Q, SR(K))\cong E_0^{-1, *}  & F^{-1}\text{Tor}^{*-1}_{P}(\Q, \wedge W) \ar[l]_-{p} \ar[r]^-{\iota}& 
\text{Tor}^{*-1}_{P}(\Q, \wedge W).
}
\]
Here $\{F^j\}$ denotes the filtration associated to the spectral sequence, $P$ is the polynomial algebra $ \Q[t_1, ..., t_l]$, $p$ and $\iota$ are the canonical projection and the inclusion, respectively. 
Since $d(x_1), ..., d(x_l)$ is not a regular sequence, it follows form Proposition \ref{prop:Baum} that there is a non-exact cocycle  
$$w = \sum_{j=1}^l u_j x_j - z$$ in 
$F^{-1}\text{Tor}^{*}_{ \Q[t_1, ..., t_l]}(\Q, \wedge W)$, where $u_i$ and $z$ are in $\wedge W$. 
Observe that the torsion algebra $\text{Tor}^{*}_{ \Q[t_1, ..., t_l]}(\Q, \wedge W)$ is isomorphic to the cohomology $H^{*}( \wedge (x_1, ..., x_l)\otimes \wedge W, d)$ as an algebra. 
We see that $\deg x_j = 1$ and then $\eta(x_j) = 0$ for each $j$. 
The element $z$ is of odd degree and in the image of the map $i$. Thus, since $H^{\text{odd}}(DJ(K); \Q)=0$, it follows that 
$$H^*(\eta)([w]) =[\eta(w)] = \eta(w)= \sum_{j=1}^l \eta(u_j) \eta(x_j) - (\pi')^*\phi(z) = 0,$$ which is a contradiction. 
\end{proof}

\medskip
\noindent
{\it Acknowledgements.}
The author thanks Grigory Solomadin and Shintaro Kuroki for many valuable discussions on toric manifolds and partial quotients without which 
he would not be able to attain Theorem \ref{thm:partial_quotients}. The author is also grateful to the referee for the careful reading of the paper and for valuable comments and suggestions. 

\appendix 
\section{A lifting lemma} 
In this section, we describe an algebraic result obtained by the Lifting lemmas \cite[Lemma 12.4 and Proposition 14.6]{F-H-T}.

\begin{lem}\label{lem:replacement}
For a commutative diagram (\ref{eq:U}) below with a Sullivan algebra $A_1$ and a KS-extension $I$, 
one has a commutative diagram (\ref{eq:U2}) in which $\widetilde{u_i}$ is a quasi-isomorphism 
if $u_i$ is for $i = 1$ or $2$.

\begin{minipage}{7cm}
\begin{eqnarray}\label{eq:U}
\xymatrix@C30pt@R15pt{
A_1 \ar[d]_{u_1} \ar[r]^-{I} & A_1\otimes \wedge W_1 \ar[d]^{u_2} \\
B_2  \ar[r]^-{\ell_2} & C_2 \\
B_1  \ar[u]_{\simeq}^{v} \ar[r]^-{\ell_1} & C_1 \ar[u]^\simeq _{v'} 
} 
\end{eqnarray}
\end{minipage}
\hspace{-1cm}
\begin{minipage}{7cm}
\begin{eqnarray}\label{eq:U2}
\xymatrix@C30pt@R15pt{
A_1 \ar[d]_{\widetilde{u_1}} \ar[r]^-{I} & A_1\otimes \wedge W_1 \ar[d]^{\widetilde{u_2}} \\
B_1   \ar[r]^-{\ell_1} & C_1 
} 
\end{eqnarray}
\end{minipage}
\end{lem}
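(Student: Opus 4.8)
The plan is to invoke the Lifting lemma \cite[Lemma 12.4 and Proposition 14.6]{F-H-T} twice — first over the base $A_1$ and then over the KS-extension $I$ — using the quasi-isomorphisms $v$ and $v'$ to transport the maps $u_1,u_2$ from the targets $B_2,C_2$ back to $B_1,C_1$.

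First I would lift $u_1$ across $v$. Since $A_1$ is a Sullivan algebra and $v:B_1\to B_2$ is a quasi-isomorphism, the Lifting lemma supplies a morphism $\widetilde{u_1}:A_1\to B_1$ together with a homotopy $v\circ\widetilde{u_1}\simeq u_1$. As homotopic morphisms agree in cohomology and $v$ is a quasi-isomorphism, $\widetilde{u_1}$ is a quasi-isomorphism precisely when $u_1$ is, which settles the case $i=1$.

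Next I would extend $\widetilde{u_1}$ over the new generators $W_1$. Put $g:=\ell_1\circ\widetilde{u_1}:A_1\to C_1$. The crucial point is to check that $v'\circ g$ and $u_2\circ I$ agree up to homotopy as morphisms $A_1\to C_2$; this is forced by the two commuting squares of (\ref{eq:U}), namely $u_2\circ I=\ell_2\circ u_1$ and $v'\circ\ell_1=\ell_2\circ v$, together with the homotopy produced above, since
\[
v'\circ g=v'\circ\ell_1\circ\widetilde{u_1}=\ell_2\circ v\circ\widetilde{u_1}\simeq\ell_2\circ u_1=u_2\circ I.
\]
Because $I:A_1\to A_1\otimes\wedge W_1$ is a KS-extension and $v':C_1\to C_2$ is a quasi-isomorphism, the relative form of the Lifting lemma then produces $\widetilde{u_2}:A_1\otimes\wedge W_1\to C_1$ extending $g$ strictly along $I$ — so that $\widetilde{u_2}\circ I=\ell_1\circ\widetilde{u_1}$, which is exactly the commutativity of (\ref{eq:U2}) — and satisfying $v'\circ\widetilde{u_2}\simeq u_2$ rel $A_1$. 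Arguing as in the first step, $\widetilde{u_2}$ is a quasi-isomorphism whenever $u_2$ is, settling the case $i=2$.

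The hard part will be the bookkeeping in this second step. The first lift $\widetilde{u_1}$ is only pinned down up to the homotopy $v\circ\widetilde{u_1}\simeq u_1$, so in extending over $\wedge W_1$ one must carry this homotopy into the relative lifting, arranging that the restriction to the base is \emph{strictly} $g$ while the comparison with $u_2$ holds only up to a homotopy constant on $A_1$. It is precisely the relative (rel $A_1$) version of \cite[Proposition 14.6]{F-H-T}, rather than an absolute lift of the Sullivan algebra $A_1\otimes\wedge W_1$ through $v'$, that lets the two triangles glue into the single commutative square (\ref{eq:U2}).
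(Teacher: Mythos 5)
Your two-stage plan (lift $u_1$ through $v$, then extend over the KS-extension $I$ by lifting through $v'$) is the same strategy the paper uses, but there is a genuine gap at the second stage. The relative Lifting lemma \cite[Proposition 14.6]{F-H-T} takes as input a \emph{strictly} commutative square
\[
\xymatrix@C30pt@R15pt{
A_1 \ar[d]_{I} \ar[r]^-{g} & C_1 \ar[d]^{v'} \\
A_1\otimes \wedge W_1 \ar[r]_-{u_2} & C_2
}
\]
and only then returns a lift $\widetilde{u_2}$ with $\widetilde{u_2}\circ I=g$ and $v'\circ\widetilde{u_2}\sim u_2$ rel $A_1$. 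In your construction the first lift satisfies only $v\circ\widetilde{u_1}\simeq u_1$ (since $v$ is merely a quasi-isomorphism, not surjective, the absolute Lifting lemma gives commutativity only up to homotopy), so you arrive at $v'\circ g\simeq u_2\circ I$ rather than equality. You flag this as ``the hard part'' and say one must ``carry the homotopy into the relative lifting,'' but you never actually do it: as stated, the hypothesis of Proposition 14.6 is not met, and without a rectification step (e.g.\ using the homotopy extension property of the cofibration $I$ to replace $u_2$ by a strictly compatible homotopic map) the argument does not close.

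The paper avoids this issue by first applying the surjective trick to $v$ and $v'$: replacing $v$ by a surjective quasi-isomorphism $\widetilde{v}:B_1\otimes\wedge S\twoheadrightarrow B_2$ (and correspondingly $\widetilde{v'}:C_1\otimes\wedge S\to C_2$), the absolute Lifting lemma \cite[Lemma 12.4]{F-H-T} produces $\xi:A_1\to B_1\otimes\wedge S$ with $\widetilde{v}\circ\xi=u_1$ \emph{strictly}, whence the square feeding into Proposition 14.6 commutes on the nose and the relative lift $\overline{u_2}$ exists; one then retracts back via the quasi-isomorphisms $\lambda:B_1\otimes\wedge S\to B_1$ and $\lambda':C_1\otimes\wedge S\to C_1$ to define $\widetilde{u_1}=\lambda\circ\xi$ and $\widetilde{u_2}=\lambda'\circ\overline{u_2}$. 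If you either insert the surjective trick before your first lift, or explicitly rectify the homotopy-commutative square before invoking Proposition 14.6, your argument becomes complete.
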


\begin{proof} By applying the surjective trick (\cite[page 148]{F-H-T}) to $v$, we have a diagram 
\begin{eqnarray*}
\xymatrix@C25pt@R15pt{
& B_1 \ar[d]^{\simeq}_{v} \ar[r]^-{\ell_1} & C_1
\ar[d]^{v'}_{\simeq} \\
& B_2  \ar[r]^-{\ell_2} & C_2 \\
A_1 \ar@{.>}[r]_-\xi \ar[ru]|(.5)\hole^(.4){u_1} & B_1 \otimes\wedge S \ar@/^3.0pc/[uu]^\lambda_{\simeq} \ar@{->>}[u]_{\simeq}^{\widetilde{v}} 
\ar[r]^-{\ell_1\otimes 1} & C_1 \otimes\wedge S
\ar[u]_{\widetilde{v'}}^{\simeq}  \ar@/_3.0pc/[uu]_{\lambda'}^{\simeq}
} 
\end{eqnarray*}
of solid arrows in which three squares are commutative. We observe that $S \cong B_2\oplus dB_2$ and that $\widetilde{v'}$ is defined by 
$\widetilde{v'}(c_1) = v'(c_1)$ for $c_1 \in C_1$ and $\widetilde{v'}(s) = \ell_2(s)$ for $s \in S$. Since $A_1$ is a Sullivan algebra, the Lifting lemma \cite[Lemma 12.4]{F-H-T} enables us to obtain the map $\xi$ which fits in the commutative triangle.  Thus we have a commutative diagram of solid arrows
\[
\xymatrix@C25pt@R20pt{
A_1 \ar[d]_I  \ar[r]^-\xi &  B_1 \otimes\wedge S \ar[r]^{\ell_1\otimes 1}& C_1 \otimes\wedge S \ar[d]^{\widetilde{v'}}_\simeq\\
A_1\otimes \wedge W_1 \ar[rr]_{u_2} \ar@{.>}[rru]_{\overline{u_2}}& & C_2. 
}
\]
Since the map $I$ is a KS-extension, by using \cite[Proposition 14.6]{F-H-T}, we have a dotted arrow  
$\overline{u_2}$ which makes the upper triangle commutative and the lower triangle commutative up to homotopy relative to $A_1$. Define 
$\widetilde{u_1} := \lambda \circ \xi$ and $\widetilde{u_2} := \lambda' \circ \overline{u_2}$. Then, we have the commutative diagram (\ref{eq:U2}). 
By the construction of the map $\widetilde{u_i}$, we see that $\widetilde{u_i}$ is a quasi-isomorphism 
if $u_i$ is  for $i = 1$ or $2$. 
\end{proof}

\end{document}